\newcommand{\E}{{\bf{E}}}
\newcommand{\PP}{{\bf{P}}}
\newcommand{\Var}{{\bf{Var}}}
\newtheorem{tm}{Theorem}
\newtheorem{lem}{Lemma}
\long\def\symbolfootnote[#1]#2
\def\thefootnote{\fnsymbol{footnote}}\footnote[#1]{#2}\endgroup}
\begin{document}

\bibliographystyle{plain}
\parindent=0pt
\centerline{\LARGE \bfseries Random intersection graph process}

\par\vskip 3.5em

\centerline{
Mindaugas Bloznelis\symbolfootnote[1]{Faculty of Mathematics and Informatics, 
Vilnius University, 03225 Vilnius, Lithuania}
\
\
and
\
\
Micha\l   \ Karo\'nski\symbolfootnote[2]{Faculty of Mathematics and Computer science,
Adam Mickiewicz University, 60769 Pozna\'n, Poland}
}

\vglue1truecm



\bigskip





\begin{abstract} We introduce a random intersection graph process aimed 
at modeling sparse evolving affiliation networks that admit tunable  (power 
law) degree distribution and  assortativity and clustering coefficients. 
We show the asymptotic degree distribution and provide explicit asymptotic formulas for  
assortativity and clustering coefficients.

\end{abstract}

\smallskip
{\bfseries keywords:}   random graph process, random intersection graph, 
degree distribution, power law, clustering, assortativity
\vskip 1em

2000 Mathematics Subject Classifications:   05C80,  05C07,  05C82

\vskip 2.5em

\section{Introduction}

Given non-negative weights $x=\{x_i\}_{i\ge 1}$ and $y=\{y_j\}_{j\ge 1}$, 
and a nondecreasing positive sequence
 $\{\tau(t)\}_{t\ge 1}$, satisfying $\lim_{t\to+\infty}\tau(t)=+\infty$,   
let $H_{x,y}$ be  the   random bipartite graph with bipartition $V=\{v_1,v_2,\dots\}$ 
and $W=\{w_1,w_2,\dots\}$, where edges $\{w_i,v_j\}$ are inserted independently and 
with probabilities 
\begin{equation}\label{ppppp}
p_{ij}=\min\Bigl\{1,\frac{x_iy_j}{\sqrt{ij}}\Bigr\}
{\mathbb I}_{\{ a\tau(j)\le i\le b \tau(j)\}}.
\end{equation}
Here $b>a>0$ are fixed numbers.
 $H_{x,y}$ defines the random intersection graph $G_{x,y}$ on the vertex set $V$ such 
that any
$u, v\in V$ are declared adjacent (denoted $u\sim v$) whenever they have a common 
neighbor in $H_{x,y}$.

Consider, for example, a library 
where a new item $w_i$ is 
acquired at time $i$, and 
where a new user $v_j$  is registered at time j. User $v_j$ picks at random
items 
from a "contemporary literature collection" 
$\{w_i: a\tau(j)\le i\le b\tau(j)\}$ relevant to time $j$ (the interval $\{i:\, a\tau(j)\le i\le b\tau(j)\}$ 
can also be considered as the lifetime of the user $v_j$).
Every actor $v_j$ and  every
item $w_i$ is assigned weight $y_j$ and $x_i$ respectively. 
These weights model the activity of actors and attractiveness of literature items. 
Now, assume that up to time $t$ the library has acquired items 
$\{w_1,\dots, w_{\tau_*(t)}\}=:W_{\tau_*(t)}$, where 
$\tau_*:{\mathbb N}\to{\mathbb N}$ is a given nondecreasing function satisfying 
 $\lim_{t\to+\infty}\tau_*(t)=+\infty$.
The subgraph $H_{x,y}(t)$ of $H_{x,y}$ induced by the bipartition  
$V_t=\{v_1,v_2,\dots, v_t\}$ and
$W_{\tau_*(t)}$
defines the random intersection graph $G_{x,y}(t)$ on the vertex set 
$V_t$:  vertices $u,v\in V_t$ are declared adjacent whenever they have a common neighbor 
in $H_{x,y}(t)$. 
The graph $H_{x,y}(t)$  represents a snapshot taken at time $t$ of the ``library" records, 
while the graph $H_{x,y}$  
 shows the complete history of the ``library". Graphs  $G_{x,y}(t)$  and $G_{x,y}$ 
represent 
adjacency relations
(between users) observed up to time $t$ and during the whole lifetime of the ``library'', 
respectively. 
Assuming, in addition, that $x$ and $y$ are realized 
values of iid sequences $X=\{X_i\}_{i\ge 1}$ and $Y=\{Y_j\}_{j\ge 1}$ we obtain 
the random graph $G_{X,Y}$  and the random 
graph process
$\{G_{X,Y}(t)\}_{t\ge 1}$. The parameters of such a network model are the probability distributions of  $X_1,Y_1$, the functions 
$\tau, \tau^*$ and the cutt-offs $a<b$.

Random intersection graph $G_{X,Y}$  is aimed at modeling sparse
evolving affiliation networks that admit 
a power law degree distribution and non-vanishing  
{\it clustering} and assortativity coefficients. 
We first observe that choosing inhomogeneous weight sequences $x$ and $y$ one 
typically obtains an inhomogeneous degree sequence of the graph $G_{x,y}$: vertices 
with larger weights attract larger numbers of neighbours. Consequently, 
in the case where the probability distributions of $X_1$ and $Y_1$ have heavy tails, 
we may expect to obtain a heavy tailed (asymptotic) degree distribution in the random 
graph $G_{X,Y}$.
Secondly, we observe that if the set $W(t)$ of items selected by a user $v_t$ 
is (stochastically) bounded and the lifetimes of two neighbours of $v_t$, 
say $v_s$ and $v_u$, intersect, then with a non-vanishing probability 
$v_s$ and $v_u$ share an item from $W(t)$. 
Consequently, the conditional probability
$\alpha_{t|su}=\PP(v_s\sim v_u|v_s\sim v_t, v_t\sim v_u)$, called the clustering coefficient, is positive and bounded away from zero. 
In particular, the underlying bipartite graph structure serves  as a
clustering mechanism.

Let us compare our model with the model of evolving network considered recently by 
Britton, Lindholm and 
Turova (2011) \cite{BrittonLT2011}, 
(see also \cite{BrittonL2010}, \cite{Turova2002} \cite{Turova2007}). 
In their model vertices are prescribed weights, called social indices, 
and
a vertex  $v_t$ with social index $s_t$ creates new edges at a 
rate proportional to $s_t$.  Clearly, both weight sequences  
$\{y_t\}_{t\ge 1}$ and $\{s_t\}_{t\ge 1}$ have the same purpose of 
modeling inhomogeneity of adjacency relations 
(hence both models possess a power law asymptotic degree distribution). But the model of  
Britton, Lindholm and Turova (2011)\cite{BrittonLT2011}  does not have
 the clustering property.  We remark, that the role of a  
bipartite structure in understanding/explaining clustering properties of some
social  networks has been discussed in
 Newman, Watts, and  Strogatz (2002)
\cite{Newman+W+S2002}. Furthermore,
empirically observed clustering properties of real affiliation networks
have been reproduced with remarkable accuracy by related models of random intersection graphs,
see  
\cite{Bloznelis2011+}, \cite{BloznelisKurauskas2012}.

In the present paper we only consider the graph $G_{X,Y}$. 
We show  the  asymptotic distribution of
the degree  $d(v_t)$ of a vertex $v_t$ as  time $t\to+\infty$. We also obtain 
explicit asymptotic expressions for clustering coefficients 
$\alpha_{t|s,u}$, $\alpha_{s|t,u}$, $\alpha_{u|s,t}$, for $s,t,u\to+\infty$  
such that $s<t<u$, and for 
the assortativity coefficient (Pearson's correlation coefficient 
between degrees of adjacent vertices)
\begin{equation}\label{r-st-0}
r_{s,t}
=
\frac
{\E_{st}d(v_s)d(v_t)-\E_{st} d(v_s)\E_{st} d_v(t)}
{\sqrt{\Var_{st}d(v_s)\Var_{st}d(v_t)}}.
\end{equation}
Here $\E_{st}$ denotes the conditional expectation given the event $v_s\sim v_t$ and
$\Var_{st}d(v_s)=\E_{st}d^2(v_s)-(\E_{st}d_v(s))^2$. We remark that (empirical) clustering 
and assortativity coefficients are  commonly used characteristics 
of statistical dependence
of adjacency relations of real networks.

Our results are stated in Section 2. Proofs are given in Section 3.

 \section{Results}

{\bf Degree.}
We first present our results on the asymptotic degree
distribution 
in $G_{X,Y}$. 
We obtain a compound probability distribution
in the case where $\tau(t)$ grows linearly in $t$ (clustering regime). 
For $\tau(t)$ growing faster than linearly in $t$, we obtain a mixed Poisson asymptotic degree distribution.
We denote 
$a_k=\E X_1^k$, and $b_k=\E Y_1^k$.

\begin{tm}\label{T1} Let $b>a>0$. Let $\tau(t)=t$. 
 Suppose  that $\E X_1^2<\infty$ and $\E Y_1<\infty$. For $t\to+\infty$ 
the random variable  $d(v_t)$ converges in distribution to the random variable
\begin{equation}\label{d*1}
d_*=\sum_{j=1}^{\Lambda_1}\varkappa_j, 
\end{equation}
where $\varkappa_1,\varkappa_2,\dots$ are independent 
and identically distributed random variables independent of the random variable $\Lambda_1$.
They are distributed as follows. For $r=0,1,2,\dots$, we have
\begin{equation}\label{d*1++}
\PP(\varkappa_1=r)=\frac{r+1}{\E\Lambda_2}\PP(\Lambda_2=r+1)
\qquad
{\text{and}}
\qquad
\PP(\Lambda_i=r)=\E \,e^{-\lambda_i}\frac{\lambda_i^r}{r!},
\qquad
i=1,2.
\end{equation}
Here $\lambda_1=2(b^{1/2}-a^{1/2})a_1Y_1$ and $\lambda_2=2(a^{-1/2}-b^{-1/2})b_1X_1$.
\end{tm}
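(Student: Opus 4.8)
The plan is to exploit the two-level structure of the bipartite graph $H_{X,Y}$ and to pass to the limit through the probability generating function (PGF), which has the pleasant feature that the size-biasing underlying \eqref{d*1++} emerges on its own. Write
\[
d(v_t)=\sum_{s\neq t}J_s, \qquad J_s=\mathbb{I}\{v_s,v_t \text{ have a common neighbour in } H_{X,Y}\}.
\]
Since $\tau(t)=t$, the neighbours of $v_t$ in $W$ lie among the items $w_i$ with $at\le i\le bt$; call this random set $N_t$. I would first condition on the weight $Y_t$ and on $N_t$. Given $N_t$ and all the weights, the edges from distinct users $v_s$ to $W$ are independent, so the $J_s$ become independent Bernoulli variables with $\PP(J_s=1)=1-\prod_{i\in N_t}(1-p_{is})$. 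A Poisson approximation then makes $d(v_t)$, conditionally on $N_t$ and the weights, asymptotically $\mathrm{Poisson}\bigl(\sum_s(1-\prod_{i\in N_t}(1-p_{is}))\bigr)$.

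Next I would analyse the two levels separately. At the first level the items of $v_t$ arise from the $(b-a)t$ independent, asymptotically negligible trials indexed by $at\le i\le bt$, and conditionally on $Y_t$ their expected number $\sum_{i=at}^{bt}\E(p_{it}\mid Y_t)\to 2(b^{1/2}-a^{1/2})a_1Y_t$, since $\E(p_{it}\mid Y_t)\approx a_1Y_t/\sqrt{it}$ and $\sum_{i=at}^{bt}i^{-1/2}=2(\sqrt b-\sqrt a)\sqrt t\,(1+o(1))$; hence $|N_t|\xrightarrow{d}\mathrm{Poisson}(\lambda_1)$ given $Y_t$, which explains $\lambda_1$ and $\Lambda_1$. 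At the second level, for a fixed connected item $w_i$ (so $i\asymp t$) the admissible users are those with $i/b\le s\le i/a$, and the same Riemann sum gives $\sum_s\E_{Y_s}p_{is}\to 2(a^{-1/2}-b^{-1/2})b_1X_i=cX_i$, where $c:=2(a^{-1/2}-b^{-1/2})b_1$; thus the co-users of $w_i$ form a $\mathrm{Poisson}(cX_i)$ cloud, which after averaging over $X_i$ carries the mixed-Poisson law $\Lambda_2=\mathrm{Poisson}(cX_1)$ of \eqref{d*1++}. Assembling both levels inside the generating function and using independence of the items, conditionally on $Y_t=y$,
\[
\E\bigl[z^{d(v_t)}\bigr]\approx\prod_{i=at}^{bt}\E_{X_i}\Bigl[(1-\pi_i)+\pi_i\,e^{c(z-1)X_i}\Bigr],\qquad \pi_i\approx\frac{X_iy}{\sqrt{it}},
\]
and since each factor equals $1-\tfrac{y}{\sqrt{it}}\E\bigl[X(1-e^{c(z-1)X})\bigr](1+o(1))$, the product converges to $\exp\bigl(\lambda_1(g(z)-1)\bigr)$ with $g(z)=a_1^{-1}\E\bigl[X\,e^{c(z-1)X}\bigr]$.

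It remains to recognise $g$. Because $\pi_i$ carries the extra factor $X_i$, the weight of a \emph{chosen} item is size-biased, so $g$ is the PGF of $\mathrm{Poisson}(cX)$ under the size-biased law of $X$. A one-line rearrangement, $\tfrac{r+1}{\E\Lambda_2}\PP(\Lambda_2=r+1)=a_1^{-1}\E\bigl[X\,e^{-cX}(cX)^r/r!\bigr]$, identifies this law with the distribution of $\varkappa_1$ in \eqref{d*1++}; averaging the PGF over $Y_t\Rightarrow Y_1$ then yields exactly the compound law $d_*=\sum_{j=1}^{\Lambda_1}\varkappa_j$ of Theorem~\ref{T1}.

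The main difficulty is upgrading these heuristics to uniform limits. Three points require genuine work. First, the truncation $p_{ij}=\min\{1,\cdot\}$ must be shown asymptotically inactive on the relevant range; this, together with the validity of the second-level Poisson approximation, is where $\E X_1^2<\infty$ is used (it also guarantees that the size-biased mean $\E\varkappa_1=ca_2/a_1$ is finite), while $\E Y_1<\infty$ keeps $\E\Lambda_1$ finite. Second, the Riemann-sum approximations and the product-to-exponential passage must be shown to survive after averaging over the weights, so that the conditional limits may be integrated against the laws of $X$ and $Y$. Third, and in my view the crux, one must justify replacing the distinct-neighbour count $\sum_s J_s$ by the multiplicity count $\sum_j\varkappa_j$: one shows that the expected number of users lying on two different items of $v_t$ is $O(1/t)$ (both items sit at indices $\asymp t$, whence $\sum_s p_{is}p_{i's}=O(1/t)$), so that, with $|N_t|=O_P(1)$, overlaps vanish in probability and the two counts share the limit $d_*$.
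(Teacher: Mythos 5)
Your proposal is correct in outline and its skeleton coincides with the paper's: both reduce the neighbour count $d(v_t)=\sum_{s}J_s$ to the multiplicity (path) count $L_t=\sum_{i\in T_t}u_i{\mathbb I}_i$ by showing that overlaps are negligible, Poissonize both levels of the bipartite structure, and concentrate the second-level sums $\sum_{s}p_{is}$ around $\gamma_2 b_1X_i$ so that the size-biased compound Poisson law emerges. Where you genuinely diverge is the final limit identification: you expand the probability generating function as a product over items, each factor equal to $1-\tfrac{ya_1}{\sqrt{it}}(1-g(z))$, and pass to $\exp(\lambda_1(g(z)-1))$; the paper instead conditions on $X,Y$ and the realized cloud sizes $\xi_{3k}$, writes the conditional characteristic function as $e^{{\bar\lambda}({\bar f}_{\varkappa}(z)-1)}$ with ${\bar f}_{\varkappa}$ a $\lambda$-weighted empirical transform of the cloud sizes, and proves ${\bar\lambda}\to\lambda_{\star}$ and ${\bar f}_{\varkappa}\to f_{\varkappa}$ by a truncation and law-of-large-numbers argument. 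Your route makes the origin of the size-biasing in (\ref{d*1++}) more transparent; the paper's buys uniform control that makes the interchange of limits routine. Two caveats on the steps you defer. First, the clean factorization of the PGF over $i\in T_t$ is only valid \emph{after} the concentration step, because the cloud of $w_i$ depends on the weights $Y_s$, $s\in T_i^*$, and these index ranges overlap for different $i$, so the factors are not conditionally independent beforehand; this is exactly the paper's passage $L_{2t}\to L_{3t}$ via (\ref{eb2}), and it must precede, not follow, the product formula. Second, your overlap bound phrased as an expected count would require $\E Y_1^2<\infty$, which is not assumed; it has to be run in probability, conditioning on $Y$ and using $t^{-2}\sum_{j\le t}Y_j^2=o_P(1)$ as in (\ref{d-SSS}) and (\ref{d-L21}). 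Neither caveat breaks your argument, but both are where the real work of Lemma \ref{d-L} lives.
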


The second moment condition $\E X_1^2<\infty$ of Theorem \ref{T1} seems to be 
redundant.  

\begin{tm}\label{T2} Let $b>a>0$ and $\nu>1$. Let $\tau(t)=t^{\nu}$, $t=1,2\dots$. 
 Suppose  that $\E X_1^2<\infty$ and $\E Y_1<\infty$. For $t\to+\infty$ 
the random variable  $d(v_t)$ converges in distribution to the random variable 
$\Lambda_3$ having the probability distribution
\begin{equation}\label{d*2}
\PP(\Lambda_3=r)=\E \,e^{-\lambda_3}\frac{\lambda_3^r}{r!},
\qquad
r=0,1,2,\dots.
\end{equation}
Here $\lambda_3=\gamma a_2b_1Y_1$ and
$\gamma=4\nu(b^{1/2\nu}-a^{1/2\nu})(a^{-1/2\nu}-b^{-1/2\nu})$.
\end{tm}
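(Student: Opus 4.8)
The plan is to condition on the weight $Y_t=y$ of $v_t$ and to exploit a conditional independence that turns $d(v_t)$ into a sum of independent indicators. Let $D_t=\{i:\,w_i\sim v_t\ \text{in}\ H_{X,Y}\}$ be the (random) set of items chosen by $v_t$; by \eqref{ppppp} we have $D_t\subseteq\{i:\,at^\nu\le i\le bt^\nu\}$. A vertex $v_s$ with $s\ne t$ is a neighbour of $v_t$ in $G_{X,Y}$ exactly when $v_s$ is linked in $H_{X,Y}$ to some item of $D_t$. Since the edges of $H_{X,Y}$ are independent, conditionally on $D_t$ and on the weight sequences the events $\{v_s\sim v_t\}_{s\ne t}$ are mutually independent, the $s$-th of them having probability $q_s=1-\prod_{i\in D_t}(1-p_{is})$. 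Hence, conditionally, $d(v_t)=\sum_{s\ne t}{\mathbb I}_{\{v_s\sim v_t\}}$ is a sum of independent Bernoulli variables, and it remains to show that this sum is asymptotically Poisson with parameter $\lambda_3(y)=\gamma a_2b_1y$.

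Next I would identify the parameter and invoke a Poisson approximation. Put $\Lambda=\sum_{s\ne t}q_s$; since the $p_{is}$ are small, $q_s\approx\sum_{i\in D_t}p_{is}$ and $\E[\Lambda\mid X,Y_t=y]\approx\sum_{s\ne t}\sum_i p_{it}p_{is}$. The product $p_{it}p_{is}$ carries the factor $X_i^2$, so a law of large numbers replaces $X_i^2$ by $a_2$ and $Y_s$ by $b_1$; after the substitutions $i=\beta t^\nu$, $s=\theta t$ a Riemann-sum approximation gives, in probability,
\begin{equation*}
\E[\Lambda\mid X,Y_t=y]\ \longrightarrow\ a_2b_1y\int \theta^{-1/2}\bigl(\log(b/a)-\nu|\log\theta|\bigr)_{+}\,d\theta .
\end{equation*}
Splitting the integral at $\theta=1$ and integrating $\theta^{-1/2}\log\theta$ by parts yields the value $\gamma=4\nu(b^{1/2\nu}-a^{1/2\nu})(a^{-1/2\nu}-b^{-1/2\nu})$, whence the limit is $\lambda_3(y)$. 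Granting that $\Lambda$ concentrates at this constant and that $\max_s q_s\to0$, $\sum_s q_s^2\to0$, Le Cam's Poisson approximation inequality bounds the total variation distance between the conditional law of $d(v_t)$ and the Poisson law of mean $\Lambda$ by $\sum_s q_s^2\to0$; thus $d(v_t)$, given $Y_t=y$, converges in distribution to the Poisson law with mean $\lambda_3(y)$.

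The main obstacle is to justify the three limits in the last sentence under the stated hypotheses $\E X_1^2<\infty$ and $\E Y_1<\infty$ only, which are too weak to control the relevant expectations: a single exceptionally attractive item (with $X_i$ of order $\sqrt{it}$) would be chosen by $v_t$ and simultaneously by order $t$ other vertices, so that $\Var\Lambda$ and $\E[d(v_t)(d(v_t)-1)]$ may diverge. Such a configuration has probability $o(1)$, hence is invisible to convergence in distribution, and I would make this precise by truncation. Replacing $X_i$ by $X_i{\mathbb I}_{\{X_i\le N_t\}}$ and $Y_s$ by $Y_s{\mathbb I}_{\{Y_s\le M_t\}}$ for thresholds with $t^\nu\PP(X_1>N_t)\to0$ and $t\,\PP(Y_1>M_t)\to0$ (which, since $\E X_1^2<\infty$ and $\E Y_1<\infty$, may be chosen with $N_t\to\infty$ and $M_t=o(t)$), the two graphs differ only on an event of vanishing probability, while $\E[X_1^2{\mathbb I}_{\{X_1\le N_t\}}]\to a_2$ and $\E[Y_1{\mathbb I}_{\{Y_1\le M_t\}}]\to b_1$ keep the parameter intact. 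For the truncated, hence bounded, weights all the required estimates follow from routine first and second moment computations: $\max_s q_s=o(1)$, $\E\sum_s q_s^2=O(M_t/t)=o(1)$, and the diagonal contribution to $\Var\Lambda$ is $O(t^{(1-\nu)/2})=o(1)$ because $\nu>1$. I expect the bookkeeping of these truncation errors to be the most laborious part.

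Finally I would remove the conditioning on $Y_t$. The conditional limit is Poisson with mean $\lambda_3(y)=\gamma a_2b_1y$, a continuous (indeed linear) function of $y$; mixing over the law of $Y_t\overset{d}{=}Y_1$ and using bounded convergence in \eqref{d*2} gives that $d(v_t)$ converges in distribution to the mixed Poisson variable $\Lambda_3$ with $\PP(\Lambda_3=r)=\E\,e^{-\lambda_3}\lambda_3^r/r!$ and $\lambda_3=\gamma a_2b_1Y_1$, which is the assertion of Theorem~\ref{T2}.
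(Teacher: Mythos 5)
Your proposal is correct in outline but follows a genuinely different route from the paper. The paper never conditions on the item set $D_t$: it replaces $d(v_t)$ by the $2$-path count $L_t=\sum_{i\in T_t}u_i{\mathbb I}_i$ (using (\ref{d-L21}) to dispose of multiple witnesses) and then runs a chain of couplings ${\hat L}_{3t}\to L_{4t}\to\cdots\to L_{7t}$ in which both layers of the bipartite graph are Poissonized, the limiting parameter emerging as the in-probability limit of the quantity $\zeta$, which is a deterministic function of $X$ and $Y_t$. You instead observe that, given $D_t$ and the weights, the adjacency indicators ${\mathbb I}_{\{v_s\sim v_t\}}$, $s\ne t$, are independent Bernoulli$(q_s)$ variables and apply Le Cam once, directly to the degree. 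This is cleaner in two respects: the $d(v_t)\ne L_t$ correction disappears, and only one Poisson approximation is needed. What the paper's longer route buys is uniformity with Theorem \ref{T1} (the same machinery yields the compound Poisson limit when $\tau(t)=t$, where your $\Lambda=\sum_s q_s$ does not concentrate because $|D_t|=O_P(1)$) and a single quantity, $Q_{XY}(t)$ of (\ref{QXY}) and (\ref{d-L23}), in which all the heavy-tail difficulties are quarantined. Your identification of the constant is correct: the integral does evaluate to $4\nu\bigl((b/a)^{1/(4\nu)}-(a/b)^{1/(4\nu)}\bigr)^2=\gamma$.

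The one place where your sketch is too optimistic is the truncation. The constraint $t^\nu\PP(X_1>N_t)\to0$ forces $N_t$ to be of order $t^{\nu/2}$ (times a slowly vanishing factor), and with $X_i$ bounded only by $t^{\nu/2}$ the ``routine second moment computation'' does not close: an item with $X_i\asymp\varepsilon t^{\nu/2}$ is selected by $v_t$ with probability $\asymp\varepsilon Y_t t^{-1/2}$ and, if selected, contributes $\asymp\varepsilon t^{1/2}$ to $\Lambda$, so its contribution to $\Var\Lambda$ is of order $t^{1/2}$; your figure $t^{(1-\nu)/2}$ presupposes a third moment, and reinstating the factor $N_t$ turns it into $t^{1/2}$. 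The same term is the obstruction in $\sum_s q_s^2$ (it is the diagonal sum $\sum_s\sum_i p_{it}p_{is}^2$, i.e.\ precisely the paper's $Q_{XY}(t)$). The cure is not a lower threshold --- then $t^\nu\PP(X_1>N_t)\to0$ fails and the coupling breaks --- but a second truncation at the level of the per-item contribution: discard from $\Lambda$ the items with $b_1\gamma'_2X_i i^{1/(2\nu)-1/2}\ge\varepsilon$ (the paper's indicator ${\mathbb I}'_k$ in the step $L_{4t}\to L_{5t}$), show that their total \emph{expected} contribution is $o(1)$ by uniform integrability of $X_1^2$, and only then apply Chebyshev, which now gives $\Var\Lambda\le\varepsilon\,\E\Lambda$. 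For the Le Cam error one can instead use $\sum_s q_s^2\le(\max_s q_s)\sum_s q_s$ together with $\max_s q_s=o_P(1)$. With that repair your argument goes through; the conditional independence, the computation of $\gamma$, and the final mixing over $Y_t$ are all sound.
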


{\it Remark 1.} The result of Theorem \ref{T2} extends to a more general class
of increasing 
nonnegative  
functions $\tau$. In particular, assuming that  
\begin{equation}\label{condition-h}
 \lim_{t\to+\infty}\frac{t}{\tau(t)}=0,
\qquad
\sup_{t>1}\frac{\tau^{-1}(2t)}{\tau^{-1}(t)}<\infty, 
\end{equation}
and that
there exists finite limit
\begin{displaymath}
\gamma^*
=
\lim_{t\to+\infty}
t^{-1/2}
\sum_{a\tau(t)\le i\le b\tau(i)}i^{-1}
\sum_{j:\, a\tau(j)\le i\le b\tau(j)}j^{-1/2},
\end{displaymath}
we obtain the convergence in distribution of $d(v_t)$ to $\Lambda_3$ defined by (\ref{d*2})
with $\lambda_3=\gamma^*a_2b_1Y_1$. Here  $\tau^{-1}$ denotes the inverse of $\tau$
(i.e., $\tau(\tau^{-1}(t))=t$).

{\it Remark 2.} The function $\tau(t)=t\ln t$, which grows slower than any power $t^{\nu}$, $\nu>1$, satisfies conditions of Remark 1 with 
$\gamma^*=4(a^{-1/2}-b^{-1/2})(b^{1/2}-a^{1/2})$.
Furthermore, the functions $\tau_1(t)=e^{\ln^2t}$ and $\tau_2(t)=e^{t}$, that grow faster than any power $t^{\nu}$, satisfy conditions of Remark 1 with 
$\gamma^*=0$.

\medskip
{\bf Clustering.} Our next result, Theorem \ref{T3}, provides explicit asymptotic formulas for clustering coefficients. We note that for $s<t<u$ the conditional
probabilities $\alpha_{s|tu}$,
 $\alpha_{t|su}$ and  $\alpha_{u|st}$ are all different and, given $0<a<b$,  mainly depend on the ratios $s/t$, $s/u$ and $t/u$.
 Denote $p_{\Delta}:=
p_{\Delta}(s,t,u)=
\PP(v_s\sim v_t, v_s\sim v_u, v_t\sim v_u)$ the probability that $v_s,v_t,v_u$ make up a triangle.

\begin{tm}\label{T3} Let $b>a>0$. Let $\tau(t)=t$. 
 Suppose  that $\E X_1^3<\infty$ and $\E Y_1^2<\infty$.  Assume that $s,t,u\to+\infty$ 
so that
 $s<t<u$ and  $\lceil au\rceil\le \lfloor bs\rfloor$.  
We have
\begin{eqnarray}\label{p(Delta)}
p_{\Delta}
&
= 
&
\frac{a_3b_1^3}{\sqrt{stu}}
\left(\frac{2}{\sqrt{au}}-\frac{2}{\sqrt{bs}}\right)+o(t^{-2}),
\\
\label{alpha-t}
\alpha_{t|su}
&
=
&
\frac{p_{\Delta}}{p_{\Delta}+a_2^2b_1^2b_2t^{-1}(su)^{-1/2}\delta_{t|su}}+o(1), 
\\
\label{alpha-s}
\alpha_{s|tu}
&
=
&
\frac{p_{\Delta}}{p_{\Delta}+a_2^2b_1^2b_2s^{-1}(tu)^{-1/2}\delta_{s|tu}}+o(1), 
\\
\label{alpha-u}
\alpha_{u|st}
&
=
&
\frac{p_{\Delta}}{p_{\Delta}+a_2^2b_1^2b_2u^{-1}(st)^{-1/2}\delta_{u|st}}+o(1).
\end{eqnarray}
Here
\begin{eqnarray}\nonumber
\delta_{t|su}
&
=
&
\ln (u/t)\ln(t/s)+ \ln (u/t)\ln(bs/au)+\ln (t/s)\ln(bs/au)+\ln^2(bs/au),
\\
\nonumber
\delta_{s|tu}
&
=
&\ln (u/t)\ln(bs/au)+\ln^2(bs/au),
\\
\nonumber
\delta_{u|st}
&
=
&
\ln (t/s)\ln(bs/au)+\ln^2(bs/au).
\end{eqnarray}
\end{tm}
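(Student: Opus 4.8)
The plan is to express every adjacency in $G_{X,Y}$ through shared items and then, for each of the four probabilities, to isolate the single combinatorial configuration that dominates. Write $N_{jk}$ for the number of items $w_i$ that are simultaneously adjacent to $v_j$ and $v_k$ in $H_{X,Y}$, so that $v_j\sim v_k$ if and only if $N_{jk}\ge 1$. Since all the probabilities in question are of order $t^{-2}$ up to logarithmic factors, the events $\{N_{jk}\ge 1\}$ behave like rare Poisson events, and I would replace $\PP(N_{jk}\ge 1)$ by $\E N_{jk}$ throughout, controlling the error by an inclusion--exclusion (second moment) bound. The hypothesis $\lceil au\rceil\le\lfloor bs\rfloor$ guarantees that the three lifetime windows $[as,bs]$, $[at,bt]$, $[au,bu]$ have a common overlap $[au,bs]$, which is precisely the range in which a single item can join all three vertices.

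First I would treat the triangle probability $p_{\Delta}$. A triangle on $v_s,v_t,v_u$ arises most cheaply from one item adjacent to all three, and such an item must lie in the triple overlap $[au,bs]$. Hence the leading term is $\sum_{au\le i\le bs}\E\bigl[p_{is}p_{it}p_{iu}\bigr]$, and since $p_{is}p_{it}p_{iu}=X_i^3Y_sY_tY_u(i^3stu)^{-1/2}$ on this range, taking the expectation over the independent weights produces the factor $a_3b_1^3$: the common item contributes $\E X_1^3=a_3$, and each of the three distinct vertices contributes $\E Y_1=b_1$. Approximating $\sum_{au\le i\le bs}i^{-3/2}$ by $\int_{au}^{bs}x^{-3/2}\,dx=2(au)^{-1/2}-2(bs)^{-1/2}$ yields (\ref{p(Delta)}). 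Configurations using three distinct items, one per edge, contribute only $O((t^{-1}\ln t)^3)=o(t^{-2})$ and are absorbed into the error.

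Next I would compute each cherry probability, which is the denominator of the corresponding clustering coefficient; for instance the denominator of $\alpha_{t|su}$ is $\PP(v_s\sim v_t,\,v_t\sim v_u)$. I would split this cherry event into its triangle part (one common item, contributing $p_{\Delta}$) and its proper part, in which $v_s,v_t$ share an item $w_i$ while $v_t,v_u$ share a \emph{different} item $w_k$. The crucial bookkeeping point is that the centre vertex $v_t$ lies on both edges, so its weight enters squared: $\E\bigl[p_{is}p_{it}p_{kt}p_{ku}\bigr]=a_2^2b_1^2b_2\,(ik\,t\sqrt{su})^{-1}$, the two distinct items each yielding $\E X_1^2=a_2$, the centre yielding $\E Y_1^2=b_2$, and the two leaves $v_s,v_u$ each yielding $b_1$. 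Summing over $i\in[at,bs]$ and $k\in[au,bt]$ and replacing each sum by $\int i^{-1}\,di$ gives $\ln(bs/at)\ln(bt/au)$; a short computation shows $\ln(bs/at)\ln(bt/au)=\delta_{t|su}$, and the analogous reductions, with the centre taken to be $v_s$ or $v_u$ and the two item-windows adjusted accordingly, deliver $\delta_{s|tu}$ and $\delta_{u|st}$. This is exactly why all three denominators carry the same weight prefactor $a_2^2b_1^2b_2$ but different index prefactors $t^{-1},s^{-1},u^{-1}$ and different $\delta$'s. Assembling $\alpha_{\cdot|\cdot\cdot}=p_{\Delta}/(p_{\Delta}+\text{two-item term})$ then gives (\ref{alpha-t})--(\ref{alpha-u}).

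The main obstacle is the rigorous control of the discarded terms, and I expect three estimates to carry the technical weight. First, one must show that the truncation in $p_{ij}=\min\{1,\cdot\}$ is immaterial, i.e.\ that the set of indices with $X_iY_j(ij)^{-1/2}>1$ contributes $o(t^{-2})$; this is where $\E X_1^3<\infty$ and $\E Y_1^2<\infty$ are consumed. Second, one needs the inclusion--exclusion estimate replacing each $\PP(N_{jk}\ge 1)$ by $\E N_{jk}$ with multiplicative error $1+o(1)$, which requires bounding $\E\binom{N_{jk}}{2}$ together with the joint correction $\E[N_{st}N_{tu}]$ minus its leading part. Third, one must verify that the proper cherry and the triangle are genuinely the only configurations of order $t^{-2}$, all mixed or higher-multiplicity configurations being $o(t^{-2})$. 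Once these bounds are in place the leading-order computations above are exact up to the stated errors and the theorem follows.
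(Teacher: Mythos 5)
Your proposal follows essentially the same route as the paper: the triangle probability is reduced to the single-common-witness configuration summed over the triple overlap $[au,bs]$ (with multi-witness configurations shown to be $o(t^{-2})$ via second moments), and each cherry probability is decomposed into the common-witness part plus the two-distinct-witness part whose first moment carries the factor $a_2^2b_1^2b_2$ and the logarithmic double sum. Your direct identity $\ln(bs/at)\ln(bt/au)=\delta_{t|su}$ (and its analogues) is a slightly cleaner way to evaluate the sum that the paper obtains by splitting the index domain into four pieces; the remaining technical estimates you flag (the $\min\{1,\cdot\}$ truncation, the second-moment/inclusion--exclusion bounds, where the paper additionally truncates on $\{Y_t\le\varepsilon t\}$ to avoid needing moments of $Y_t$ beyond the second) are exactly the ones the paper carries out.
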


We remark that the condition  $\lceil au\rceil\le \lfloor bs\rfloor$ of Theorem \ref{T3}
excludes the trivial case where $p_{\Delta}\equiv 0$. Indeed, for $s<u$, the 
converse inequality 
$\lceil au\rceil> \lfloor bs\rfloor$ means that the lifetimes of $v_s$ and $v_u$ do not
intersect and, therefore, we have  $\PP(v_s\sim v_u)\equiv 0$.
In addition, the inequality $\lceil au\rceil\le \lfloor bs\rfloor$ implies that positive numbers
$\delta_{t|su}$, $\delta_{s|tu}$, $\delta_{u|st}$ are bounded from above  by a constant 
(only depending on $a$ and $b$).

 {\bf Assortativity.} Let us now consider the sequence of random variables $\{d(v_t)\}_{t\ge 1}$. 
We assume that $\tau(t)=t$. From Theorem 
\ref{T1} we know about the possible limiting distributions for $d(v_t)$. 
Moreover, from the fact
that $G_{X,Y}$ is sparse we can conclude that, for any given $k$, the random variables 
$d(v_t), d(v_{t+1}),\dots, d(v_{t+k})$ are asymptotically independent as $t\to+\infty$.
An interesting question is about the statistical dependence between $d(v_s)$ and $d(v_t)$ 
if we know,
in addition, that vertices $v_s$ and $v_t$ are adjacent in $G_{XY}$. 
We assume  that $s<t$ and  let $s,t\to+\infty$ so that  $bs-at\to+\infty$. 
Note  that the 
latter condition ensures that the shared lifetime of $v_s$ and $v_t$ tends to infinity as 
$s,t\to+\infty$. In this case we obtain that conditional moments
\begin{eqnarray}\label{s-momentai}
&&
 \E_{st}d(v_s)= \E_{st}d(v_t)+o(1)=\delta_1+o(1),
\\
\nonumber
&&
\E_{st}d^2(v_s)=\E_{st}d^2(v_t)+o(1)=\delta_2+o(1),
\\
\nonumber
&&
\E_{st}d(v_s)d(v_t)= \delta_2-\Delta+o(1),
%
%
%
\end{eqnarray}
are asymptotically constant. Here $\Delta =h_1^{-1}(2h_3+2h_5+4(h_6-h_7))$ and 
\begin{displaymath}
\delta_1=1+h_1^{-1}(h_2+2h_3),
\qquad
 \delta_2=1+h_1^{-1}(3h_2+6h_3+h_4+6h_5+4h_6).
\end{displaymath}
Furthermore, we denote
\begin{eqnarray}
\label{estLs}
&&
 h_1=a_2b_1^2,
\qquad
h_2=a_3b_1^3{\tilde \gamma},
\qquad
h_3=a_2^2b_1^2b_2{\tilde \gamma}(\sqrt{b}-\sqrt{a}),
\\
\nonumber
&&
h_4=a_4b_1^4{\tilde \gamma}^2,
\qquad
h_5=a_2a_3b_1^3b_2{\tilde \gamma}^2(\sqrt{b}-\sqrt{a}),
\\
\nonumber
&&
h_6=a_2^3b_1^3b_3{\tilde \gamma}^2(\sqrt{b}-\sqrt{a})^2,
\qquad
h_7=a_2^3b_1^2b_2^2{\tilde \gamma}^2(\sqrt{b}-\sqrt{a})^2,
\end{eqnarray}
and ${\tilde \gamma}=2(a^{-1/2}-b^{-1/2})$. A sketch of the derivation of relations  
(\ref{s-momentai})
is given in Section 3 below.

Finally, from  (\ref{r-st-0}) and (\ref{s-momentai}) we obtain that the 
assortativity coefficient 
\begin{eqnarray}\label{r-st}
 r_{st}=1-\frac{\Delta}{\delta_2-\delta_1^2}+o(1)
\end{eqnarray}
is asymptotically constant.

\bigskip

We note that each vertex of the graph $G_{X,Y}$ can be identified with the random subset
of $W$, consisting of items selected by that vertex, and two vertices are adjacent 
in $G_{X,Y}$
whenever their subsets intersect. 
Graphs 
describing such adjacency relations between members
of a {\it finite} family 
 ${\tilde V}=\{{\tilde v}_1,\dots, {\tilde v}_n\}$ 
of random subsets of a given {\it finite} set 
${\tilde W}=\{w_1,\dots, w_m\}$ are called random intersection graphs, see
\cite{karonski1999}, \cite{Singer1995} 
and \cite{godehardt2001}. Our  graph $G_{X,Y}$ is, therefore, a random intersection 
graph
evolving in time.
One important application of random intersection graphs, 
defined by random subsets of
fixed size, is the  model of  a secure wireless sensor network that uses 
random predistribution
of keys introduced in  \cite{eschenauer2002}. Another potential application
of random intersection graphs is  the statistical analysis and modeling
of affiliation networks. For example, they are useful in explaining clustering properties of
 the actor network, see
\cite{Bloznelis2011+}, \cite{BloznelisKurauskas2012}. 
Finally, we mention 
that asymptotic degree distribution and clustering properties 
of random intersection graphs have been studied  in
\cite{Bloznelis2008},
\cite{Bloznelis2011+},
\cite{Deijfen},
\cite{GJR2010},
\cite{JKS},
\cite{JaworskiStark2008},
\cite{stark2004}.

{\bf Concluding remarks.}
We have shown that the random graph  $G_{X,Y}$ admits tunable asymptotic degree 
distribution (icluding the power law) and  clustering and assortativity 
coefficients. 
An interesting problem were to study 
$G_{X,Y}$ and $\{G_{X,Y}(t)\}_{t\ge 1}$ 
in the case where  deterministic cutt-offs $a<b$ in (\ref{ppppp}) 
are replaced by random cutt-offs 
$A_j\le B_j$ (so that the lifetime $[A_j\tau(j),B_j\tau(j)]$  of an actor $v_j$ were random). Furthermore,  abrupt cutt-offs can be
replaced by some smooth cutt-off functions.

\section{Proofs}
We first introduce some notation.
Then we prove Theorems \ref{T1}, \ref{T2}, \ref{T3}. The proof of Remark 1 goes along the lines of the proof of Theorem \ref{T2} and is omitted.

Throughout the proof limits are taken as $t\to+\infty$, if not stated otherwise. 
By $c$ we denote  positive  numbers 
which may only depend on $a,b$ and $\tau$. We remark that $c$ may attain different values 
in different places.
 We say that a sequence of random variables $\{\zeta_t\}_{t\ge 1}$ converges to zero 
in probability (denoted $\zeta_t=o_P(1)$) whenever $\limsup_t\PP(|\zeta_t|>\varepsilon)=0$
for each $\varepsilon>0$. The sequence $\{\zeta_t\}_{t\ge 1}$ is called stochastically bounded
(denoted $\zeta_t=O_P(1)$) whenever for each $\delta>0$ there exists $N_{\delta}>0$ such that 
$\limsup_t\PP(|\zeta_t|>N_{\delta})<\delta$.

Time intervals  
\begin{equation}\label{TT}
T_t=\{i: a\tau(t)\le i\le b\tau(t)\},
\qquad
 T^*_i=\{j: a\tau(j)\le i\le b\tau(j)\}
 \end{equation}
 can be interpreted as  lifetimes of the actor $v_t$ and attribute $w_i$ respectively. 
Here and below elements of $V$ are called actors, elements of $W$ are called attributes.
The oldest and youngest actors that may  establish a communication link with $v_t$ 
are denoted $v_{t_{-}}$ and  $v_{t_{+}}$. Here 
\begin{displaymath}
t_{-}=\min\{j: T_j\cap T_t\not=\emptyset\},
\qquad
t_{+}=\max\{j: T_j\cap T_t\not=\emptyset\}.
\end{displaymath}

The event "edge $\{w_i, v_j\}$ is present in $H_{X,Y}$" is denoted $w_i\to v_j$. 
Introduce random variables
\begin{eqnarray}\nonumber
&&
 {\mathbb I}_{ij}={\mathbb I}_{\{w_i\to v_j\}},
\qquad
{\mathbb I}_i={\mathbb I}_{it},
\qquad
u_i=\sum_{j\in T^*_i\setminus\{t\}}{\mathbb I}_{ij},
\qquad
L=L_t=\sum_{i\in T_t}u_i{\mathbb I}_i,
\\
\label{b-sum}
&&
b_{k}(I)=\sum_{j\in I}Y_{j}^kj^{-k/2},
\qquad
a_{k}(I)=\sum_{i\in I}X_i^ki^{-k/2},
\qquad
I\subset {\mathbb N},
\\
\label{QXY}
&&
\lambda_{ij}=X_iY_j/\sqrt{ij},
\qquad
Q_{XY}(t)=\sum_{i\in T_t}\lambda_{it}\sum_{j\in T_i^*\setminus\{t\}}\lambda_{ij}\min\{1,\lambda_{ij}\}.
\end{eqnarray}
We remark, that $u_i$ counts all neighbours of $w_i$ in $H_{X,Y}$ 
belonging to the set $V\setminus \{v_t\}$, and $L_t$ counts all paths of length $2$ in $H_{X,Y}$ starting from $v_t$.
Introduce  events 
\begin{displaymath}
{\cal A}_t=\{\lambda_{it}\le 1, \ i\in T_t\},
\qquad
{\cal B}_t(\varepsilon)=\{Y_j\le \varepsilon^2j,\ j\in [t_{-},t_{+}]\setminus\{t\}\}, 
\quad 
\varepsilon>0.
\end{displaymath}
By ${\tilde \PP}$ and ${\tilde \E}$ we denote the conditional probability and 
expectation given $X, Y$. 
The conditional probability and expectation given $Y$ is denoted $\PP_X$ and $\E_X$.
By $\PP_t$ and $\E_t$ we denote the conditional probability and expectation given $Y_t$.
By  $d_{TV}(\zeta,\xi)$ we denote the total variation 
distance between the probability distributions
of
 random variables $\zeta$ and $\xi$. In the case where $\zeta,\xi$ and $X, Y$ are 
defined on the same probability space, we denote 
 by 
 ${\tilde d}_{TV}(\zeta,\xi)$ the total 
variation distance between the conditional distributions of $\zeta$ and $\xi$ given $X,Y$.

In the proof  we use the following 
simple fact. For a uniformly bounded 
sequence of random variables $\{\zeta_t\}_{t\ge 1}$ 
(i.e., $\exists$ nonrandom $h>0$ such that 
$\forall t$\, $0<\zeta_t<h$ almost surely)  we have
\begin{equation}\label{fact}
 \zeta_t=o_P(1)
\quad
\Rightarrow
\quad
\E\zeta_t=o(1).
\end{equation}
In particular, given
a sequence of bivariate random vectors $\{(\phi_t, \psi_t)\}_{t\ge 1}$, 
defined on the same probability space as $X,Y$, we have
\begin{equation}\label{fact*}
 {\tilde d}_{TV}(\phi_t, \psi_t)=o_P(1)
\quad
\Rightarrow
\quad
d_{TV}(\phi_t,\psi_t)=o(1).
\end{equation}

\subsection{Proof of Theorem \ref{T1}}

Before the proof we collect auxiliary results.
For $\tau(t):=t$ and $T_t$, $T_i^*$ defined in (\ref{TT}), we have 
\begin{eqnarray}\label{Sumos}
&&
\sum_{i\in T_t}i^{-1/2}=t^{1/2}\gamma_1+rt^{-1/2}, 
\qquad
\sum_{j\in T_i^*}j^{-1/2}=i^{1/2}\gamma_2+r'i^{-1/2},
\\
\nonumber
&&
\gamma_1:=2(b^{1/2}-a^{1/2}),
\qquad
\qquad
\gamma_2:=2(a^{-1/2}-b^{-1/2}),
\end{eqnarray}
where $|r|, |r'|\le c$.


\begin{lem}\label{d-L} Let $t\to+\infty$. Assume that $\E X_1^2<\infty$ and $\E Y_1<\infty$.
We have 
\begin{eqnarray}
\label{d-bbb}
&&
\forall \varepsilon>0
\qquad
\PP({\cal B}_t(\varepsilon))=1-o(1),
\\
\label{d-SSS}
&&
t^{-1}b_2([t_{-}, t_{+}]\setminus\{t\})=o_P(1),
\\
\label{d-L21}
&&
\PP(d(v_t)\not=L_t)=o(1),
\\
\label{d-L22}
&&
\PP({\cal A}_t)=1-o(1),
\\
\label{d-L23}
&&
Q_{XY}(t)=o_P(1),
\qquad
\E Q_{XY}(t)=o(1).
\end{eqnarray}
For any integers $t>a^{-1}(b+b^{-1})$ and $i\in T_t$, and any $0< \varepsilon<1$ we have 
\begin{eqnarray}
\label{eb1}
&&
|\E a_1(T_t)-a_1\gamma_1t^{1/2}|\le ca_1t^{-1/2},
\qquad
|\E b_1(T_i^*\setminus\{t\})-b_1\gamma_2i^{1/2}|\le cb_1i^{-1/2},
\\
\label{eb2}
&&
\E|b_1(T_i^*\setminus\{t\})-b_1\gamma_2 i^{1/2}|{\mathbb I}_{{\cal B}_t(\varepsilon)}
\le 
ci^{1/2}(\varepsilon b_1^{1/2} +\E Y_1{\mathbb I}_{\{Y_1>\varepsilon^2t_{-}\}})+cb_1i^{-1/2},
\\
\label{eb2+A}
&&
\E|a_1(T_t)-a_1\gamma_1t^{1/2}|\le c a_2^{1/2}.
\end{eqnarray}
\end{lem}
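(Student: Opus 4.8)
The plan is to establish the eight displayed statements of Lemma~\ref{d-L} essentially one at a time, using the elementary sums (\ref{Sumos}) as the basic bookkeeping device and exploiting the truncation events ${\cal A}_t$ and ${\cal B}_t(\varepsilon)$ to compensate for the fact that only $\E X_1^2<\infty$ and $\E Y_1<\infty$ are assumed. Throughout I use that $\tau(t)=t$ forces $t_{-},t_{+}\asymp t\to+\infty$, that $\sum_{i\in T_t}i^{-1}\le c$ and $\sum_{i\in T_t}i^{-1/2}\le ct^{1/2}$, and that $a_1\le a_2^{1/2}$ by Cauchy--Schwarz.

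I would dispose of the routine statements first. For (\ref{d-bbb}) a union bound gives $\PP({\cal B}_t(\varepsilon)^c)\le\sum_{j\in[t_{-},t_{+}]}\PP(Y_1>\varepsilon^2 j)$, the tail of a series whose convergence is equivalent to $\E Y_1<\infty$, hence $o(1)$ as $t_{-}\to+\infty$. Statement (\ref{d-SSS}) then follows by truncation: on ${\cal B}_t(\varepsilon)$ one has $Y_j^2\le\varepsilon^2 jY_j$, so $t^{-1}b_2([t_{-},t_{+}]\setminus\{t\})\le\varepsilon^2 t^{-1}\sum_{j\in[t_{-},t_{+}]}Y_j=\varepsilon^2 O_P(1)$ by the law of large numbers, and letting $\varepsilon\to0$ after (\ref{d-bbb}) yields $o_P(1)$. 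For (\ref{d-L22}) I would condition on $Y_t$ and apply Chebyshev's inequality with $\E X_1^2<\infty$: since $i\asymp t$ gives $\sqrt{it}\ge\sqrt a\,t$, the union bound yields $\PP({\cal A}_t^c\mid Y_t)\le\min\{1,cY_t^2/t\}$, whence $\PP({\cal A}_t^c)\to0$ by dominated convergence. The bounds (\ref{eb1}) are immediate from (\ref{Sumos}) once one notes that $i\in T_t$ implies $t\in T_i^*$, so deleting the single index $j=t$ changes a sum by $t^{-1/2}\le ci^{-1/2}$. Finally (\ref{eb2+A}) splits into bias plus fluctuation: the bias is $O(a_1t^{-1/2})\le ca_2^{1/2}$ by (\ref{eb1}), and for $S=\sum_{i\in T_t}(X_i-a_1)i^{-1/2}$ I use $\E|S|\le(\Var S)^{1/2}=(\Var(X_1)\sum_{i\in T_t}i^{-1})^{1/2}\le ca_2^{1/2}$.

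The two delicate estimates are (\ref{eb2}) and (\ref{d-L23}). For (\ref{eb2}) the obstruction is that, having only $\E Y_1<\infty$, I cannot bound the fluctuation of $b_1(T_i^*\setminus\{t\})$ directly in $L^2$. Instead I would observe that on ${\cal B}_t(\varepsilon)$ every contributing $Y_j$ equals its truncation $\hat Y_j:=Y_j\mathbb{I}_{\{Y_j\le\varepsilon^2 j\}}$, replace $b_1$ by $\sum_j\hat Y_j/\sqrt j$, and drop $\mathbb{I}_{{\cal B}_t(\varepsilon)}$. Splitting into fluctuation and bias, the bounded-increment variance bound $\Var\hat Y_j\le\E[Y_j^2\mathbb{I}_{\{Y_j\le\varepsilon^2 j\}}]\le\varepsilon^2 jb_1$ produces the term $c\varepsilon b_1^{1/2}i^{1/2}$, while $\E\hat Y_j=b_1-\E[Y_1\mathbb{I}_{\{Y_1>\varepsilon^2 j\}}]$ together with (\ref{Sumos}) and $j\ge t_{-}$ produces $cb_1i^{-1/2}+ci^{1/2}\E[Y_1\mathbb{I}_{\{Y_1>\varepsilon^2 t_{-}\}}]$, which is exactly the claimed right-hand side.

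The main obstacle is (\ref{d-L23}): the naive bound $\lambda_{ij}\min\{1,\lambda_{ij}\}\le\lambda_{ij}^2$ would force the moments $\E X_1^3$ and $\E Y_1^2$, which are not available. I would instead split the inner sum according to whether $Y_j\le\varepsilon^2 j$. On $\{Y_j\le\varepsilon^2 j\}$ one has $\lambda_{ij}\le\varepsilon^2 X_i/\sqrt a$, so $\min\{1,\lambda_{ij}\}\le\min\{1,\varepsilon^2 X_i/\sqrt a\}$; pulling this factor out and summing $\sum_j\lambda_{ij}$ by (\ref{Sumos}) bounds the contribution to $\E Q_{XY}(t)$ by $cb_1^2\,\E[X_1^2\min\{1,\varepsilon^2 X_1/\sqrt a\}]$, where the last expectation tends to $0$ as $\varepsilon\to0$ by dominated convergence. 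On $\{Y_j>\varepsilon^2 j\}$ I use $\min\{1,\lambda_{ij}\}\le1$ and $\E[Y_1\mathbb{I}_{\{Y_1>\varepsilon^2 j\}}]\le\E[Y_1\mathbb{I}_{\{Y_1>\varepsilon^2 t_{-}\}}]=o(1)$ to bound that contribution by $ca_2b_1\,\E[Y_1\mathbb{I}_{\{Y_1>\varepsilon^2 t_{-}\}}]=o(1)$; letting first $t\to\infty$ and then $\varepsilon\to0$ gives $\E Q_{XY}(t)=o(1)$, and $Q_{XY}(t)=o_P(1)$ follows by Markov's inequality. Lastly (\ref{d-L21}) reduces to (\ref{d-SSS}): the event $\{d(v_t)\neq L_t\}$ forces some neighbour $v_j$ of $v_t$ to share at least two attributes with $v_t$, and Markov's inequality applied conditionally on $X,Y$ bounds its probability by ${\tilde\E}[\#]\le\tfrac12 t^{-1}Y_t^2\,a_2(T_t)^2\,b_2([t_{-},t_{+}]\setminus\{t\})=o_P(1)$, since $t^{-1}b_2=o_P(1)$ and $\E a_2(T_t)\le ca_2$ gives $a_2(T_t)=O_P(1)$; the implication (\ref{fact}) then converts this into $o(1)$.
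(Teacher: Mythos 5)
Your proof is correct, and on most items it coincides with the paper's: the union bound plus Markov for (\ref{d-bbb}) and (\ref{d-L22}), the reduction of (\ref{d-L21}) to the two-witness count $S$ controlled by $t^{-1}b_2([t_{-},t_{+}]\setminus\{t\})$, the derivation of (\ref{eb1}) from (\ref{Sumos}), and the bias-plus-fluctuation decomposition with the truncation $Y_j{\mathbb I}_{\{Y_j\le \varepsilon^2 j\}}$ for (\ref{eb2}) and the Cauchy--Schwarz variance bound for (\ref{eb2+A}). The one genuinely different step is (\ref{d-L23}), which is also the only place where the limited moment assumptions really bite. The paper chooses an increasing, submultiplicative $\varphi$ with $\varphi(u)\to\infty$, $\E{\hat X}_1^2\varphi({\hat X}_1)<\infty$ and $\E{\hat Y}_1\varphi({\hat Y}_1)<\infty$ (a de la Vall\'ee-Poussin uniform-integrability device), deduces $\min\{1,{\hat\lambda}_{ij}\}\le \varphi({\hat X}_i)\varphi({\hat Y}_j)/\varphi(\sqrt{ij})$, and gets the single clean estimate $\E Q^*_{XY}(t)=O(1/\varphi(t_*))$, i.e.\ a rate. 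You instead split the inner sum on $\{Y_j\le\varepsilon^2 j\}$, where $\lambda_{ij}\le \varepsilon^2 X_i/\sqrt a$ yields the factor $\E X_1^2\min\{1,\varepsilon^2X_1/\sqrt a\}$, and on its complement, where $\min\{1,\lambda_{ij}\}\le 1$ yields $c\,a_2b_1\E Y_1{\mathbb I}_{\{Y_1>\varepsilon^2 t_{-}\}}$; sending $t\to\infty$ first and then $\varepsilon\to 0$ gives $\E Q_{XY}(t)=o(1)$ by dominated convergence, and Markov upgrades this to $o_P(1)$ since $Q_{XY}(t)\ge 0$. Your route is more elementary (no auxiliary $\varphi$) at the price of a double limit and no rate; both are valid. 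Your derivations of (\ref{d-SSS}) (through ${\cal B}_t(\varepsilon)$ and the law of large numbers rather than the paper's direct appeal to $t^{-2}\sum_{j\le t}Y_j^2=o_P(1)$) and of (\ref{d-L21}) (conditioning on both $X$ and $Y$ and keeping $a_2(T_t)=O_P(1)$ rather than integrating out $X$ to get $a_2^2$) are equivalent minor variants, and all the individual estimates check out.
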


\begin{proof}[Proof of Lemma \ref{d-L}]
Proof of (\ref{d-bbb}).
We estimate the probability of the complement event ${\overline {\cal B}}_t(\varepsilon)$ 
using the union bound and Markov's inequality
\begin{displaymath}
\PP({\overline {\cal B}}_t(\varepsilon))
\le 
\sum_{t_{-}\le j\le t_{+}}\PP(Y_i>\varepsilon^2 j)
=
t_{+}\PP(Y_1>\varepsilon^2 t_{-})
\le
\varepsilon^{-2}(t_{+}/t_{-})\E Y_1{\mathbb I}_{\{Y_1>\varepsilon^2 t_{-}\}}=o(1).
\end{displaymath} 
Here we estimate $t_{+}/t_{-}\le c$ and invoke the bound $\E Y_1{\mathbb I}_{\{Y_1>s\}}=o(1)$,
for $s\to+\infty$.

Proof of  (\ref{d-SSS}). 
Denote ${\hat b}_2(t)=t^{-2}\sum_{1\le j\le t}Y_j^2$. We note that $\E Y_1<\infty$ implies
${\hat b}_2(t)=o_P(1)$. 
The latter bound in combination with the simple inequality $t_{+}/t_{-}\le c$
implies (\ref{d-SSS}).
%
%
%
%

 Proof of (\ref{d-L22}). Let ${\overline{\cal A}}_t$ denote the complement event 
to ${\cal A}_t$.
 We have, by the union bound and Markov's inequality,
 \begin{displaymath}
 \PP_t({\overline {\cal A}}_t)
 \le
 \sum_{i\in T_t}\PP_t(\lambda_{it}\ge 1)
 \le 
 \sum_{i\in T_t}(it)^{-1}Y_t^2a_2
 \le c a_2t^{-1}Y_t^2.
\end{displaymath}
We obtain the bound
$\PP_t({\overline {\cal A}}_1)=o(1)$, which implies (\ref{d-L22}), see (\ref{fact}).

Proof of (\ref{d-L21}). In view of (\ref{fact}) it suffices to show that
$\PP_X(d(v_t)\not=L_t)=o_P(1)$.
We note that $d(v_t)\not= L_t$ if and only if $S\ge 1$, where 
$S=\sum'
{\mathbb I}_{i_1}
{\mathbb I}_{i_2}
{\mathbb I}_{i_1j}
{\mathbb I}_{i_2j}$.
Here we denote  
$\sum'=\sum_{\{i_1,i_2\}\subset T_t}\sum_{j\in T^*_{i_1}\cap T^*_{i_2}, \, j\not=t}$.
Observing that 
\begin{displaymath}
\E_X {\mathbb I}_{i_1}
{\mathbb I}_{i_2}
{\mathbb I}_{i_1j}
{\mathbb I}_{i_2j}
= 
\E_X p_{i_1t}p_{i_2t}p_{i_1j}p_{i_2j}
\le
a_2^2Y_t^2Y_j^2/(i_1i_2tj) 
\end{displaymath}
we obtain, by Markov's inequality,
\begin{equation}\label{SXS}
\PP_X(d(v_t)\not=L_t)
=
\PP_X(S\ge 1)
\le 
\E_X S
\le 
a_2^2Y_t^2t^{-1}\sum' Y_j^2(i_1i_2j)^{-1}.
\end{equation}
The simple bound $\sum_{\{i_1,i_2\}\subset T_t}\frac{1}{i_1i_2}\le c$ implies
$\sum' Y_j^2(i_1i_2j)^{-1}\le cb_2([t_{-},t_{+}]\setminus\{t\})$. 
Now, by (\ref{d-SSS}) the right-hand side of (\ref{SXS}) 
tends to zero in probability.

Proof of (\ref{d-L23}). Denote ${\hat X}_i=\max\{X_i, 1\}$, ${\hat Y}_j=\max\{Y_j, 1\}$,  and 
let ${\hat Q}_{XY}(t)$  denote the sum (\ref{QXY}), where $\lambda_{ij}$ is replaced by ${\hat\lambda}_{ij}={\hat X}_i{\hat Y}_j/\sqrt{ij}$.
We observe that  $\E X_1^2<\infty$ and $\E Y_1<\infty$ imply 
\begin{displaymath}
a_{\varphi}:=\E {\hat X}_1^2\varphi({\hat X}_1)<\infty,
\qquad
b_{\varphi}:=\E {\hat Y}_1\varphi({\hat Y}_1)<\infty,
\end{displaymath}
for some positive increasing function $\varphi:[1,+\infty)\to[0,+\infty)$ 
satisfying $\varphi(u)\to+\infty$ as $u\to+\infty$ (clearly, $\varphi(\cdot)$ 
depends on the distributions of $X_1$ and $Y_1$). In addition, we can choose 
$\varphi$ satisfying
 $\varphi(u)\le u$ and $\varphi(su)\le \varphi(s)\varphi(u)$, for $s,u\ge 1$. 
From these inequalities one  derives the inequality
 $\min\{1,{\hat \lambda}_{ij}\}
\le 
\varphi({\hat X}_i)\varphi({\hat Y}_j)/\varphi(\sqrt{ij})$. The latter inequality implies
 \begin{displaymath}
 {\hat Q}_{XY}(t)
 \le 
 {\hat Y}_t
 Q^*_{XY}(t),
 \qquad
 Q^*_{XY}(t):=
 \sum_{i\in T_t}
 \frac{{\hat X}_i^2\varphi({\hat X}_i)}{\sqrt{ti}}
 \sum_{j\in T^*_i\setminus\{t\}}\frac{{\hat Y}_j\varphi({\hat Y}_j)} {\sqrt{ij}}
 \frac{1}{\varphi(\sqrt{ij})}
\end{displaymath}
Furthermore, for $i\in T_t$ and $j\in T_i^*$ we have  
$ij\ge \lfloor at\rfloor t_{-}=:t_*^2$, and $t_*\to+\infty$ as $t\to+\infty$. Hence
\begin{displaymath}
\E Q^*_{XY}(t)\le \frac{1}{\varphi(t_*)}
a_{\varphi}b_{\varphi}
\sum_{i\in T_t}\frac{1}{\sqrt{ti}}\sum_{j\in T^*_i}\frac{1}{\sqrt{ij}}
=
O\left(\frac{1}{\varphi(t_*)}\right)=o(1).
\end{displaymath}
This bound together with  the inequalities $Q_{XY}(t)\le {\hat Q}_{XY}(t)\le {\hat Y}_tQ^*_{XY}(t)$ shows (\ref{d-L23}).

Proof of (\ref{eb1}). These inequalities follow from (\ref{Sumos}).

Proof of (\ref{eb2}). Denote  $\Delta=|{\tilde b}-b_1\gamma_2 i^{1/2}|$, where
${\tilde b}$ denotes the sum 
$b_1(T_i^*\setminus\{t\})$, but with  $Y_j$  replaced by 
${\tilde Y}_{j}=Y_j{\mathbb I}_{\{Y_j\le \varepsilon^2j\}}$, 
$j\in T_i^*\setminus\{t\}$.
We have  
\begin{equation}\label{eb2+}
|b_1(T_i^*\setminus\{t\})-b_1\gamma_2 i^{1/2}|{\mathbb I}_{{\cal B}_{t}(\varepsilon)}
=
\Delta{\mathbb I}_{{\cal B}_{t}(\varepsilon)}
\le
\Delta 
\le 
\Delta_1+\Delta_2+\Delta_3,
\end{equation}
where we denote
$\Delta_1=|{\tilde b}-\E{\tilde b}|$,
$\Delta_2=|\E{\tilde b}-\E b_1(T_i^*\setminus\{t\})|$,
$\Delta_3=|\E b_1(T_i^*\setminus\{t\})- b_1\gamma_2 i^{1/2}|$.
Next, we evaluate $\E \Delta_1$ and $\Delta_2$:
\begin{eqnarray}\label{Delta1}
&&
(\E\Delta_1)^2
\le
\E(\Delta_1^2)
\le 
\sum_{j\in T_i^*\setminus\{t\}}j^{-1}\E {\tilde Y}_j^2\le \varepsilon^2b_1|T_i^*|, 
\\
\label{Delta2}
&&
\Delta_2\le\sum_{j\in T_i^*\setminus\{t\}}j^{-1/2}\E Y_j{\mathbb I}_{\{Y_j>\varepsilon^2j\}}
\le 
\E Y_1{\mathbb I}_{\{Y_1>\varepsilon^2t_{-}\}}\sum_{j\in T_i^*\setminus\{t\}}j^{-1/2}.
\end{eqnarray}
In (\ref{Delta1}) we first apply Cauchy-Schwartz, then use the linearity of  variance of an iid sum, and finally apply the inequality
$\Var {\tilde Y}_j\le \E {\tilde Y}_j^2\le j^{-1}\varepsilon^2\E Y_j$.   
Invoking (\ref{eb1}), (\ref{Delta1}), (\ref{Delta2}) in (\ref{eb2+}) 
and using  (\ref{Sumos}) and $|T_i^*|\le ci$
we obtain (\ref{eb2}).

Proof of (\ref{eb2+A}).
We write $\E|a_1(T_t)-a_1\gamma_1t^{1/2}|\le \E {\tilde \Delta}_1+{\tilde\Delta}_2$, 
where 
${\tilde \Delta}_1:=|a_1(T_t)-\E a_1(T_t)|$ 
and ${\tilde \Delta}_2=|\E a_1(T_t)-a_1\gamma_1t^{1/2}|$, and invoke the inequalities
\begin{displaymath}
(\E{\tilde \Delta}_1)^2\le \E{\tilde \Delta}_1^2=\sum_{i\in T_t}j^{-1}(a_2-a_1^2)\le c a_2
\end{displaymath}
and ${\tilde \Delta}_2\le ca_1\le ca_2^{1/2}$, see (\ref{eb1}).

\end{proof}

Inequality (\ref{LeCam})  below
is  referred to as LeCam's inequality, see e.g., \cite{Steele}.
\begin{lem}\label{LeCamLemma} Let $S={\mathbb I}_1+ {\mathbb I}_2+\dots+ {\mathbb I}_n$ be the sum of independent random indicators
with probabilities $\PP({\mathbb I}_i=1)=p_i$. Let $\Lambda$ be Poisson random variable with mean $p_1+\dots+p_n$. The total variation
distance between the distributions $P_S$ of $P_{\Lambda}$ of $S$ and $\Lambda$
\begin{equation}\label{LeCam}
d_{TV}(S,\Lambda):=\sup_{A\subset \{0,1,2\dots \}}|\PP(S\in A)-\PP(\Lambda\in A)|
\le
\sum_{i}p_i^2.
\end{equation}
\end{lem}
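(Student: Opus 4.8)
The plan is to prove the inequality by a coupling argument, reducing the global comparison of $S$ and $\Lambda$ to a sum of one‑dimensional Bernoulli‑versus‑Poisson comparisons. First I would record the coupling characterisation of the total variation distance: for any two integer‑valued random variables $\zeta,\xi$ and any coupling $(\zeta',\xi')$ of them (random variables on a common space with the prescribed marginal laws), one has $d_{TV}(\zeta,\xi)\le \PP(\zeta'\not=\xi')$. Indeed, for every $A\subset\{0,1,2,\dots\}$,
\[
\PP(\zeta\in A)-\PP(\xi\in A)=\PP(\zeta'\in A)-\PP(\xi'\in A)\le \PP(\zeta'\in A,\ \xi'\notin A)\le \PP(\zeta'\not=\xi'),
\]
and taking the supremum over $A$ gives the claim.

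Next I would establish subadditivity of $d_{TV}$ under independent convolution. Using the fact that a sum of independent Poisson variables is again Poisson with the added mean, write $\Lambda=\Lambda_1+\dots+\Lambda_n$, where the $\Lambda_i$ are independent and $\Lambda_i$ is Poisson with mean $p_i$. For each $i$ choose a maximal coupling $({\mathbb I}_i',\Lambda_i')$ of ${\mathbb I}_i$ and $\Lambda_i$ achieving $\PP({\mathbb I}_i'\not=\Lambda_i')=d_{TV}({\mathbb I}_i,\Lambda_i)$, and take these $n$ couplings jointly independent. Then $\bigl(\sum_i{\mathbb I}_i',\sum_i\Lambda_i'\bigr)$ is a coupling of $(S,\Lambda)$, so by the characterisation above together with the union bound,
\[
d_{TV}(S,\Lambda)\le \PP\Bigl(\textstyle\sum_i{\mathbb I}_i'\not=\sum_i\Lambda_i'\Bigr)\le \sum_{i=1}^n\PP({\mathbb I}_i'\not=\Lambda_i')=\sum_{i=1}^n d_{TV}({\mathbb I}_i,\Lambda_i).
\]

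Finally I would bound each single‑term distance. A direct evaluation of the optimal set $A=\{k:\PP({\mathbb I}_i=k)>\PP(\Lambda_i=k)\}$, which here is just $\{1\}$ since $1-p_i\le e^{-p_i}$, gives the exact value $d_{TV}({\mathbb I}_i,\Lambda_i)=p_i(1-e^{-p_i})$; using $1-e^{-p_i}\le p_i$ this is at most $p_i^2$. Substituting into the subadditivity bound yields $d_{TV}(S,\Lambda)\le\sum_i p_i(1-e^{-p_i})\le\sum_i p_i^2$, as required. The argument is entirely routine; the only step requiring care is the subadditivity reduction, where one must make the per‑coordinate couplings jointly independent so that the summed pair is genuinely a coupling of $(S,\Lambda)$. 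The Poisson structure enters only through the decomposition $\Lambda=\sum_i\Lambda_i$, and no deeper obstacle arises.
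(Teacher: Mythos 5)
Your proof is correct: the coupling characterisation of $d_{TV}$, the decomposition $\Lambda=\Lambda_1+\dots+\Lambda_n$ with independent maximal couplings per coordinate, and the exact single-term value $d_{TV}({\mathbb I}_i,\Lambda_i)=p_i(1-e^{-p_i})\le p_i^2$ are all sound. The paper does not prove this lemma at all --- it simply cites Steele's article on Le Cam's inequality --- and your argument is precisely the standard coupling proof found there, so there is nothing to compare beyond noting that you have supplied the proof the authors chose to reference.
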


\begin{proof}[Proof of Theorem  \ref{T1}] 
Before the proof we introduce some notation.
Given $X,Y$, we generate independent Poisson random variables 
\begin{displaymath}
\eta_i, 
\quad
\xi_{1i},
\quad
\xi_{3i}, 
\quad
\xi_{4i},
\quad
\Delta_{ri},
\qquad
i\in T_t,
\quad
r=1,2,3,
\end{displaymath}
with conditional mean values 
\begin{eqnarray}\nonumber
&&
{\tilde \E}\eta_i=\lambda_{it},
\qquad
{\tilde \E}\xi_{1i}=\sum_{j\in T^*_i\setminus\{t\}}p_{ij},
\qquad
{\tilde \E}\xi_{3i}=X_ib_1\gamma_2,
\qquad
{\tilde \E}\xi_{4i}=X_i{\overline b}i^{-1/2},
\\
\nonumber
&&
{\tilde \E}\Delta_{1i}=\sum_{j\in T^*_i\setminus\{t\}}(\lambda_{ij}-p_{ij}),
\qquad
{\tilde \E}\Delta_{2i}=X_i\delta_{2i}i^{-1/2},
\qquad
{\tilde \E}\Delta_{3i}=X_i\delta_{3i}i^{-1/2}.
\end{eqnarray}
Here 
\begin{displaymath}
\delta_{2i}=b_1(T_i^*\setminus\{t\})-{\overline b},
\qquad
\delta_{3i}=b_1\gamma_2 i^{1/2}-{\overline b},
\qquad
{\overline b}=\min\{b_1(T_i^*\setminus\{t\}),\,  b_1\gamma_2 i^{1/2}\}.
\end{displaymath}
Finally, we define $\xi_{2i}=\xi_{1i}+\Delta_{1i}$, $i\in T_t$ and introduce random variables
\begin{equation}\label{LGL}
L_{0t}=\sum_{i\in T_t}\eta_iu_i,
\qquad
L_{rt}=\sum_{i\in T_t}\eta_i\xi_{ri},
\qquad
r=1,2,3.
\end{equation}
 We assume, in addition, that given $X,Y$ the families of random variables
$\{{\mathbb I}_i, i\in T_t\}$ and $\{\xi_{ri}, i\in T_t, r=1,2,3,4\}$ 
are conditionally independent, and that $\{\eta_i, i\in T_t\}$ is conditionally independent
of the set of edges of $H_{X,Y}$ that are not incident to $v_t$.

We are ready to start the proof. In view of (\ref{d-L21}) the random variables $d(v_t)$ 
and $L_t$ have the same asymptotic 
distribution (if any). We shall prove that $L_t$ converges in distribution to $d_*$. 
In the proof we  approximate $L_t$ by the random variable $L_{3t}$, see (\ref{DeltaT}) and
(\ref{L-L}) below.  Afterwards
 we show that $L_{3t}$ converges in 
distribution to $d_*$.

In order to show that$L_t$ and $L_{3t}$ have the same asymptotic distribution  
(if any) we prove the bounds
\begin{eqnarray}\label{DeltaT}
&&
d_{TV}(L_t, L_{0t})=o(1),
\qquad
d_{TV}(L_{0t},L_{1t})=o(1),
\\
\label{L-L}
&&
\E|L_{1t}-L_{2t}|=o(1),
\qquad
\qquad
{\tilde L}_{2t}-{\tilde L}_{3t}=o_P(1).
\end{eqnarray}
Here ${\tilde L}_{2t}$ and ${\tilde L}_{3t}$ are marginals of the random 
vector $({\tilde L}_{2t}, {\tilde L}_{3t})$ constructed in (\ref{marginal}) below which has the 
property that ${\tilde L}_{2t}$ has the same distribution as $L_{2t}$ and ${\tilde L}_{3t}$ has 
the same distribution as $L_{3t}$.

Let us prove the first bound of (\ref{DeltaT}). We shall show below that
\begin{equation}\label{TV1}
 {\tilde d}_{TV}(L_t, L_{0t}){\mathbb I}_{{\cal A}_t}\le t^{-1}Y_t^2a_2(T_t).
\end{equation} 
From the inequality $\E a_2(T_t)=\sum_{i\in T_t}i^{-1}a_2\le c\,a_2$ we conclude that
$a_2(T_t)$ is stochastically bounded.
Hence $t^{-1}Y_t^2 a_2(T_t)=o_P(1)$. This bound and (\ref{TV1}) combined with 
(\ref{d-L22}) imply 
\begin{displaymath}
{\tilde d}_{TV}(L_t, L_{0t})
\le 
{\tilde d}_{TV}(L_t, L_{0t}){\mathbb I}_{{\cal A}_t}+{\mathbb I}_{{\overline{\cal A}}_t}=o_P(1).
\end{displaymath}
Now the first bound of (\ref{DeltaT}) follows from (\ref{fact*}).
It remains to prove (\ref{TV1}).
We denote  
$L'_k=\sum_{i=\lfloor at\rfloor }^k{\mathbb I}_iu_i+\sum_{i=k+1}^{\lfloor bt\rfloor}\eta_iu_i$ and write, by the triangle inequality, 
\begin{displaymath}
 {\tilde d}_{TV}(L_t,L_{0t})\le \sum_{k\in T_t}{\tilde d}_{TV}(L'_{k-1}, L'_k).
\end{displaymath}
Then we estimate 
${\tilde d}_{TV}(L'_{k-1}, L'_k)
\le 
{\tilde d}_{TV}(\eta_k,{\mathbb I}_k)\le (kt)^{-1}Y_t^2X_k^2$.
Here the first inequality follows from the properties of the total variation distance. 
The second inequality follows from Lemma \ref{LeCamLemma} and the fact 
that on the event ${\cal A}_t$ we have
$p_{kt}=\lambda_{kt}$.

\medskip

Let us prove the second bound of (\ref{DeltaT}).
In view of (\ref{fact}) it suffices to  show that
  ${\tilde d}_{TV}(L_{0t},L_{1t})=o_P(1)$. For this purpose we write, 
by the triangle inequality, 
\begin{equation}\label{dTV4}
 {\tilde d}_{TV}(L_{0t},L_{1t})\le \sum_{k\in T_t}{\tilde d}_{TV}(L^*_{k-1}, L^*_k),
\end{equation}
where  
$L^*_k:=\sum_{i=\lfloor at\rfloor}^k\eta_iu_i+\sum_{i=k+1}^{\lfloor bt\rfloor}\eta_i\xi_{1i}$, and
estimate
\begin{equation}\label{dTV41}
{\tilde d}_{TV}(L^*_{k-1}, L^*_k)\le {\tilde d}_{TV}(\eta_ku_k,\eta_k\xi_{1k})
\le 
{\tilde \PP}(\eta_k\not=0){\tilde d}_{TV}(u_k,\xi_{1k}).
\end{equation}
Now, invoking the inequalities  
\begin{displaymath}
{\tilde \PP}(\eta_k\not=0)=1-e^{-\lambda_{kt}}\le \lambda_{kt}
\qquad
{\text{and}}
\qquad
{\tilde d}_{TV}(u_k,\xi_{1k})\le \sum_{j\in T_k^*\setminus\{t\}}p^2_{kj},
\end{displaymath}
 see (\ref{LeCam}), we obtain from 
(\ref{dTV4}), (\ref{dTV41})  and (\ref{d-L23}) that
\begin{displaymath}
 {\tilde d}_{TV}(L_0,L_1)\le Q_{XY}(t)=o_P(1).
\end{displaymath}

\medskip

Let us prove the first  bound of (\ref{L-L}).
We observe that 
\begin{displaymath}
 |L_{2t}-L_{1t}|=L_{2t}-L_{1t}
=
\sum_{i\in T_t}\eta_i\Delta_{1i}
\end{displaymath}
and 
\begin{displaymath}
{\tilde \E}\sum_{i\in T_t}\eta_i\Delta_{1i}
=
\sum_{i\in T_t}\lambda_{it}\sum_{j\in T^*_i\setminus\{t\}}
(\lambda_{ij}-1){\mathbb I}_{\{\lambda_{ij}>1\}}
\le Q_{XY}(t). 
\end{displaymath}
We obtain $\E|L_{2t}-L_{1t}|\le \E Q_{XY}(t)=o(1)$, see
 (\ref{d-L23}).

\medskip

Let us prove the second  bound of (\ref{L-L}).  We note  that the random vector
\begin{equation}\label{marginal}
({\tilde L}_{2t},{\tilde L}_{3t}),
\qquad
{\tilde L}_{2t}=\sum_{i\in T_t}\eta_i(\xi_{4i}+\Delta_{2i}), 
\qquad
{\tilde L}_{3t}=\sum_{i\in T_t}\eta_i(\xi_{4i}+\Delta_{3i})
\end{equation}
has the marginal distributions of $(L_{2t},L_{3t})$. 
In addition, since $\Delta_{2i}$, $\Delta_{3i}\ge 0$ and at most one of them
is non-zero, we have $|\Delta_{2i}-\Delta_{3i}|=\Delta_{2i}+\Delta_{3i}$.
Therefore, we can write
\begin{equation}\label{L2-L3}
{\tilde \Delta}:=|{\tilde L}_{2t}-{\tilde L}_{3t}|
\le 
\sum_{i\in T_t}|\eta_i||\Delta_{2i}-\Delta_{3i}|=\sum_{i\in T_t}\eta_i
(\Delta_{2i}+\Delta_{3i}).
\end{equation}
We remark that given $X,Y$ the random variable $\Delta_{2i}+\Delta_{3i}$ has 
Poisson distribution with (conditional) mean value 
\begin{displaymath}
 {\tilde \E} (\Delta_{2i}+\Delta_{3i})
=
X_i i^{-1/2} \delta_i,
\qquad
\delta_i:=|b_1(T_i^*\setminus\{t\})-b_1\gamma_2 i^{1/2}|. 
\end{displaymath}
Therefore, (\ref{L2-L3}) implies
${\tilde \E}{\tilde \Delta}
\le 
t^{-1/2} Y_t\sum_{i\in T_t}X_i^2i^{-1}\delta_i$. Next, for $0<\varepsilon<1$, we write
\begin{displaymath}
 \E{\mathbb I}_{{\cal B}_t(\varepsilon)}{\tilde \Delta}
\le 
\E{\mathbb I}_{{\cal B}_t(\varepsilon)}
t^{-1/2} Y_t\sum_{i\in T_t}X_i^2i^{-1}\delta_i
=
b_1a_2t^{-1/2}\sum_{i\in T_t}i^{-1}\E \delta_i{\mathbb I}_{{\cal B}_t(\varepsilon)}.
\end{displaymath}
Invoking  upper bound (\ref{eb2}) 
for $\E\delta_i{\mathbb I}_{{\cal B}_t(\varepsilon)}$ we obtain
$\E{\mathbb I}_{{\cal B}_t(\varepsilon)}{\tilde \Delta}\le cb_1^{3/2}a_2\varepsilon+
o(1)$. Finally, this bound combined with  Markov's inequality and (\ref{d-bbb}) yields
\begin{displaymath}
 \PP({\tilde \Delta}\ge 1)=
\PP(\{{\tilde \Delta}\ge 1\}\cap {\cal B}_t(\varepsilon))+o(1)
\le 
\E{\mathbb I}_{{\cal B}_t(\varepsilon)}{\tilde \Delta}+o(1)
\le 
cb_1^{3/2}a_2\varepsilon+
o(1).
\end{displaymath}
We conclude that $\PP({\tilde \Delta}\not=0)=\PP({\tilde \Delta}\ge 1)=o(1)$.

\medskip


Next  we prove that $L_{3t}$ converges in distribution to $d_*$ defined by (\ref{d*1}). 
Let $Y_{\star}$ be a random copy of $Y_1$, which is independent of $X,Y$. 
Given $X,Y,Y_{\star}$, we generate independent Poisson random variables
$\eta^{\star}_{k}$, $k\in T_t$
with (conditional) mean values
$\E(\eta^{\star}_{k}|X,Y,Y_{\star})=\lambda_{k\star}$, 
where $\lambda_{k\star}=X_kY_{\star}(kt)^{-1/2}$. We assume that, given 
$X,Y,Y_{\star}$, 
the family of random variables $\{\eta^{\star}_k,\, k\in T_t\}$ is conditionally independent
of $\{\xi_{3k},\, k\in T_t\}$. 
Define $L^{\star}_t=\sum_{k\in T_t}\eta^{\star}_k\xi_{3k}$. We note that $L^{\star}_t$ is
defined
in the same way as $L_{3t}$ above, but with $Y_t$ replaced by $Y_{\star}$.
Let $d_{\star}$ be defined in the same way as $d_*$, but with $\lambda_1$ replaced by 
$\lambda_{\star}=Y_{\star}a_1\gamma_1$. 
Since $L_{3t}$  has the same distribution as $L^{\star}_t$, and $d_*$ has the
same distribution as $d_{\star}$, it suffices to show that 
$L^{\star}_t$ converges in distribution to $d_{\star}$.
For this purpose we 
 show the convergence of Fourier-Stieltjes transforms
  $\E e^{izL^{\star}_t}\to\E e^{izd_{\star}}$, 
for each $z\in (-\infty,+\infty)$. Denote 
$\Delta^{\star}(z)=e^{izL^{\star}_t}-e^{iz d_{\star}}$. We shall show below 
that, for any real $z$ and any realized value $Y_{\star}$ 
there exists a positive constant 
$c^{\star}=c^{\star}(z,Y_{\star})$ such that for every $0<\varepsilon<0.5$ 
we have
\begin{equation}\label{epsilon}
 \limsup_t|\E(\Delta^{\star}(z)|Y_{\star})|< c^{\star}\varepsilon.
\end{equation}
Clearly, (\ref{epsilon}) implies $\E(\Delta^{\star}(z)| Y_{\star})=o(1)$.
 This fact together with the simple inequality
 $|\Delta^{\star}(z)|\le 2$  yields $\E \Delta^{\star}(z)=o(1)$, 
by Lebesgue's dominated convergence theorem.
Finally, the identity  $\E \Delta^{\star}(z)=\E e^{izL^{\star}_t}-\E e^{izd_{\star}}$ 
implies
$\E e^{izL^{\star}_t}\to\E e^{izd_{\star}}$.


We fix $0<\varepsilon<0.5$ and prove (\ref{epsilon}). 
Before the proof we introduce  some notation. Denote
\begin{eqnarray}\nonumber
 &&
f_\varkappa(z)=\E e^{iz\varkappa_1},
\qquad
{\bar f}_{\varkappa}(z)=\sum_{r\ge 0}e^{izr}{\bar p}_r, 
\qquad
{\bar p}_r
=
{\bar \lambda}^{-1}\sum_{k\in T_t}\lambda_{k\star}{\mathbb I}_{\{\xi_{3k}=r\}},
\qquad
{\bar \lambda}=\sum_{k\in T_t}\lambda_{k\star},
\\
\nonumber
&&
\delta=({\bar f}_\varkappa(z)-1){\bar \lambda}-(f_{\varkappa}(z)-1)\lambda_{\star},
\qquad
f(z)=\E_{\star}e^{izd_{\star}}, 
\qquad
{\bar f}(z)={\bar\E}e^{izL^{\star}_t}.
\end{eqnarray}
Here ${\bar \E}$ denotes the conditional expectation given $X,Y, Y_{\star}$ 
and $\{{\xi}_{3k},\, k\in T_t\}$. By $\E_{\star}$ we denote the conditional expectation given 
$Y_{\star}$.


Introduce the event
${\cal D}=\{|a_1(T_t)-\gamma_1a_1t^{1/2}|<\varepsilon t^{1/2}\min\{1, \gamma_1a_1\}\}$ 
and let ${\overline{\cal D}}$ 
denote the complement event.  
Furthermore, select the number 
$T>1/\varepsilon$  such that $\PP(\varkappa_1\ge T)< \varepsilon$. By 
$c^{\star}_1, c^{\star}_2, \dots$ 
we denote positive numbers which do not depend on $t$.

We observe that, given $Y_{\star}$, the conditional distribution of $d_{\star}$ is the 
compound Poisson distribution
with the characteristic function $f(z)=e^{\lambda_{\star}(f_{\varkappa}(z)-1)}$. Similarly,
given $X,Y, Y_{\star}$ 
and $\{\xi_{3k},\, k\in T_t\}$, 
the conditional distribution of $L^{\star}_t$ is the 
compound Poisson distribution
with the characteristic function 
${\bar f}(z)=e^{{\bar \lambda}({\bar f}_{\varkappa}(z)-1)}$.
In the proof of (\ref{epsilon}) we exploit the convergence
${\bar\lambda}\to\lambda_{\star}$ and ${\bar f}_{\varkappa}(z)\to f_{\varkappa}(z)$.


\medskip

Let us prove  (\ref{epsilon}). 
We write 
\begin{displaymath}
\E_{\star}\Delta^{\star}(z)=I_{1}+I_{2},
\qquad
I_{1}=\E_{\star}\Delta^{\star}(z){\mathbb I}_{\cal D},
\qquad
I_2=\E_{\star}\Delta^{\star}(z){\mathbb I}_{\overline{\cal D}}.         
\end{displaymath}
Here $|I_2|\le 2\PP_{\star}({\overline {\cal D}})=2\PP({\overline {\cal D}})=o(1)$. 
Indeed, the bound $\PP({\overline {\cal D}})=o(1)$ follows from (\ref{eb2+A}), 
by Markov's inequality.
Next we estimate $I_{1}$. Combining the identity $\E_{\star}\Delta^{\star}(z)=\E_{\star}f(z)(e^{\delta}-1)$
with the inequalities $|f(z)|\le 1$ and $|e^{s}-1|\le |s|e^{|s|}$, 
we obtain
\begin{equation}
 |I_1|\le \E_{\star}|\delta|e^{|\delta|}{\mathbb I}_{\cal D}
\le 
c^{\star}_1\E_{\star}|\delta|{\mathbb I}_{\cal D}.
\end{equation}
Here we estimated $e^{|\delta|}\le e^{6\lambda_{\star}}=:c^{\star}_1$ using the inequalities
 \begin{displaymath}
 |\delta|\le 2{\bar \lambda}+2\lambda_{\star},
\qquad
{\bar \lambda}=Y_{\star}t^{-1/2}a_1(T_t)\le 2\lambda_{\star}. 
 \end{displaymath}
We remark that the last  inequality holds  provided that  event ${\cal D}$ occurs.


\medskip

Finally, we show that 
$\E_{\star}|\delta|{\mathbb I}_{\cal D}
\le 
(c^{\star}_2+c^{\star}_3\lambda_{\star}+c^{\star}_4\lambda_{\star})\varepsilon+o(1)$. 
To this aim we write
\begin{displaymath}
\delta
=
({\bar f}_{\varkappa}(z)-1)({\bar\lambda}-\lambda_{\star})+({\bar f}_{\varkappa}(z)-f_{\varkappa}(z))\lambda_{\star}
\end{displaymath}
and estimate $|\delta|\le 2|{\bar\lambda}-\lambda_{\star}|+\lambda_{\star}|{\bar f}_{\varkappa}(z)-f_{\varkappa}(z)|$.
The inequality, which holds on the event ${\cal D}$, $|{\bar\lambda}-\lambda_{\star}|\le Y_{\star}\varepsilon$ implies 
$\E_{\star}|{\bar\lambda}-\lambda_{\star}|{\mathbb I}_{\cal D}\le c_2^{\star}\varepsilon$ with $c_2^{\star}:=Y_{\star}$.
Next we show that
\begin{displaymath}
 \E_{\star}|{\bar f}_{\varkappa}(z)-f_{\varkappa}(z)|{\mathbb I}_{\cal D}\le (c^{\star}_3+c^{\star}_4)\varepsilon+o(1).
\end{displaymath}
We first  split 
\begin{displaymath}
 {\bar f}_{\varkappa}(z)-f_{\varkappa}(z)=\sum_{r\ge 0}e^{izr}({\bar p}_r-p_r)=R_1-R_2+R_3,
\end{displaymath}
and then estimate separately the terms
\begin{displaymath}
 R_1=\sum_{r\ge T}e^{izr}{\bar p}_r,
\qquad
R_2=\sum_{r\ge T}e^{izr}p_r,
\qquad 
R_3=\sum_{0\le r<T}e^{izr}({\bar p}_r-p_r).
\end{displaymath}
Here we denote
$p_r=\PP(\varkappa_1=r)$. The upper bound for $R_2$ follows by the choice of $T$ 
\begin{displaymath}
 |R_2|\le \sum_{r\ge T}p_r=\PP(\varkappa_1\ge T)< \varepsilon.
\end{displaymath}
Next,
combining the identities
${\bar p}_r=(a_1(T_t))^{-1}\sum_{k\in T_t}k^{-1/2}X_k{\mathbb I}_{\{\xi_{3k}=r\}}$
and
\begin{displaymath}
 \sum_{r\ge T}\sum_{k\in T_t}k^{-1/2}X_k{\mathbb I}_{\{\xi_{3k}=r\}}=\sum_{k\in T_t}k^{-1/2}X_k{\mathbb I}_{\{\xi_{3k}\ge T\}} 
 \end{displaymath}
with the inequality $a_1(T_t)\ge t^{1/2}a_1\gamma_1/2$, which holds on the event ${\cal D}$,
 we obtain
\begin{equation}\nonumber
 |R_1|{\mathbb I}_{\cal D}
 \le
\sum_{r\ge T}{\bar p}_r
\le
 \frac{2}{a_1\gamma_1 t^{1/2}}\sum_{k\in T_t}\frac{X_k}{k^{1/2}}{\mathbb I}_{\{\xi_{3k}\ge T\}}
\le 
\frac{2}{a_1\gamma_1 t^{1/2}}\sum_{k\in T_t}\frac{X_k \xi_{3k}}{Tk^{1/2}}.
\end{equation}
Now, the identity $\E_{\star} X_k \xi_{3k}=a_2b_1\gamma_2$ implies 
$\E_{\star}|R_1|{\mathbb I}_{\cal D}\le c^{\star}_4 T^{-1}\le c^{\star}_4\varepsilon$.

\medskip

Now we estimate $R_3$. 
We denote $p'_r=a_1(T_t)(a_1\gamma_1t^{1/2})^{-1}{\bar p}_r$
and observe that the inequality $|a_1(T_t)(a_1\gamma_1t^{1/2})^{-1}-1|\le \varepsilon$, which holds on the event ${\cal D}$, implies
\begin{displaymath}
 |\sum_{0\le r\le T}e^{itr}({\bar p}_r- p'_r)|{\mathbb I}_{\cal D}
 \le 
 \varepsilon\sum_{0\le r\le T}
{\bar p}_r\le \varepsilon.
\end{displaymath}
In the last inequality we use the fact that the probabilities  $\{{\bar p}_r\}_{r\ge 0}$ 
sum up to $1$. It follows now that
\begin{equation}\label{R3-M}
 |R_3|{\mathbb I}_{\cal D}\le \varepsilon+\sum_{0\le r\le T}|p'_r-p_r|.
\end{equation}
Next we estimate
\begin{equation}\label{R3-0}
\E_{\star}|p'_r-p_r|\le \E_{\star}|p'_r-\E_{\star}p'_r|+|\E_{\star}p'_r-p_r|
\end{equation}
where, by the Cauchy-Schwartz and the linearity of the variance of an iid sum, we have
\begin{eqnarray}\label{R3-1}
&&
(\E_{\star}|p'_r-\E_{\star}p'_r|)^2\le \E_{\star}|p'_r-\E_{\star}p'_r|^2\le (a_1\gamma_1t^{1/2})^{-2}a_2(T_t)\le ct^{-1} a_2a_1^{-2},
\\
\label{R3-2}
&&
|p_r-\E_{\star}p'_r|=p_r|1-(\gamma_1t^{1/2})^{-1}\sum_{k\in T_t}k^{-1/2}|\le ct^{-1}.
\end{eqnarray}
In (\ref{R3-1}) we first apply the Cauchy-Schwartz inequality, then use  the linearity of  variance and the simple inequality
$\Var X_k{\mathbb I}_{\{\xi_{3k}=r\}}\le a_2$. In (\ref{R3-2}) we use the identity
$\E_{\star}X_k{\mathbb I}_{\{\xi_{3k}=r\}}=a_1p_r$ and (\ref{Sumos}). 
From (\ref{R3-0}), (\ref{R3-1}), (\ref{R3-2}) we conclude that $\E_{\star}|p'_r-p_r|=O(t^{-1/2})$. Now (\ref{R3-M}) implies 
\begin{displaymath}
\E_{\star} |R_3|{\mathbb I}_{\cal D}
\le 
\varepsilon+O(|T|t^{-1/2})=\varepsilon+o(1).
\end{displaymath}
\end{proof}

\medskip

\subsection{Proof of Theorem \ref{T2}}

Here we assume that $\tau(t):=t^{\nu}$.  In the proof below we apply
the following simple 
approximations 
\begin{eqnarray}\label{Sumos2}
&&
\sum_{k\in T_t}k^{-(1-2\nu)/(2\nu)}=t^{1/2}\gamma'_1+rt^{(2\nu)^{-1}-1}, 
\qquad
\sum_{j\in T_k^*}j^{-1/2}=k^{(2\nu)^{-1}}\gamma'_2+r'k^{-(2\nu)^{-1}},
\\
\nonumber
&&
\gamma'_1:=2\nu(b^{(2\nu)^{-1}}-a^{(2\nu)^{-1}}),
\qquad
\qquad
\gamma'_2:=2(a^{-(2\nu)^{-1}}-b^{-(2\nu)^{-1}}),
\end{eqnarray}
where $|r|, |r'|\le c$. 
We also make use of relations (\ref{d-bbb}), (\ref{d-L21}), (\ref{d-L22}) and 
(\ref{d-L23}), which 
remain valid in the case where $\tau(t)=t^{\nu}$, and of the inequalities, for $k\in T_t$,
\begin{eqnarray}\label{nu1}
&& |\E b_1(T_k^*\setminus\{t\})-b_1\gamma'_2k^{1/(2\nu)}|\le cb_1k^{-1/(2\nu)},
\\
\label{nu2}
&&
\E|b_1(T_k^*\setminus\{t\})-b_1\gamma'_2 k^{1/(2\nu)}|{\mathbb I}_{{\cal B}_t(\varepsilon)}
\le 
ck^{1/(2\nu)}(\varepsilon b_1^{1/2} +\E Y_1{\mathbb I}_{\{Y_1>\varepsilon^2t_{-}\}})
+cb_1k^{-1/(2\nu)}.
\end{eqnarray}
We note that (\ref{nu1}) follows from the second identity of (\ref{Sumos2}), and (\ref{nu2})
is obtained  in the same way as (\ref{eb2}) above.

\begin{proof}[Proof of Theorem  \ref{T2}] 
Before the proof we introduce some notation.
Given $\varepsilon\in (0,1)$, 
denote
\begin{displaymath}
\zeta=\sum_{k\in T_t}\lambda_{kt}\zeta_{k},
\quad
\zeta_{k}=\beta_kb_1X_k{\mathbb I}'_{k},
\quad
{\mathbb I}'_{k}={\mathbb I}_{\{\beta_k b_1X_k<\varepsilon\}},
\quad
\beta_k=k^{(1-\nu)/(2\nu)}\gamma'_{2}.
\end{displaymath}
 Given $X,Y$, we generate independent Poisson random variables 
$\eta_k,{\hat \xi}_{3k}$, $k\in T_t$, 
with (conditional) mean values
 ${\tilde \E}\eta_k=\lambda_{kt}$, ${\tilde \E}{\hat \xi}_{3k}= \beta_kb_1X_k$ 
and independent 
 Bernoulli random variables
${\tilde {\mathbb I}}_{k}$, $k\in T_t$ with success probabilities 
\begin{displaymath}
{\tilde \PP}({\tilde {\mathbb I}}_{k}=1)=1-{\tilde \PP}({\tilde {\mathbb I}}_{k}=0)=\zeta_{k}.
\end{displaymath}
We assume that, given $X,Y$, 
the sequences 
$\{{\mathbb I}_k, k\in T_t\}$, 
$\{{\tilde {\mathbb I}}_{k}, k\in T_t\}$, $\{\eta_{k}, k\in T_t\}$,
$\{{\hat \xi}_{3 k}, k\in T_t\}$
are conditionally independent. 
Next, we introduce random variables
\begin{displaymath}
{\hat L}_{3t}=\sum_{k\in T_t}\eta_k{\hat \xi}_{3 k},
\quad
L_{4t}=\sum_{k\in T_t}{\mathbb I}_k{\hat \xi}_{3 k},
\quad
L_{5t}=\sum_{k\in T_t}{\mathbb I}_k{\mathbb I}'_{k}{\hat \xi}_{3 k},
\quad
L_{6t}=\sum_{k\in T_t}{\mathbb I}_k{\tilde {\mathbb I}}_{k}.
\end{displaymath}
Furthermore, we define the random variable $L_{7t}$ as follows. We first generate $X,Y$. 
Then, given $X,Y$, we
generate a Poisson random variable with the conditional mean value 
$\zeta$. The realized value of the Poisson random variable is denoted $L_{7t}$. Thus, we have
$\PP(L_{7t}=r)=\E e^{-\zeta}\zeta^r/r!$, for $r=0,1,\dots$.

Now we are ready to prove Theorem \ref{T2}. In the first step of the proof
we show that random variables $d(v_t)$ and ${\hat L}_{3t}$ have the same asymptotic 
distribution (if any). Here we proceed as in the proof of (\ref{DeltaT}), (\ref{L-L}) above 
and make use of 
(\ref{d-bbb}), (\ref{d-L21}), 
(\ref{d-L22}), (\ref{d-L23}), 
(\ref{nu1}), (\ref{nu2}).
In the second step  we show
that ${\hat L}_{3t}$ converges in distribution to $\Lambda_3$. For this purpose we prove that
\begin{eqnarray}\label{TL1}
&&
d_{TV}({\hat L}_{3t}, L_{4t})=o(1),
\qquad
\E(L_{4t}-L_{5t})=o(1),
\\
\label{TL2}
&&
d_{TV}(L_{6t},L_{7t})=o(1),
\qquad
\E e^{izL_{7t}}-\E e^{iz\Lambda_3}=o(1),
\end{eqnarray}
for every $-\infty<z<+\infty$, 
and that there exists $c>0$, depending only on $a,b,\nu$, such that 
for any $\varepsilon\in (0,1)$ we have
\begin{equation}\label{TL3}
d_{TV}(L_{5t}, L_{6t})\le c a_2b_1^2 \varepsilon.
\end{equation}

Let us prove (\ref{TL1}), (\ref{TL2}), (\ref{TL3}). The first bound of (\ref{TL1}) 
is obtained in the same way as the first bound of (\ref{DeltaT}).
 To show the second 
bound  of (\ref{TL1}) we write
\begin{displaymath}
{\tilde \E}(L_{t5}-L_{t6})
=
\sum_{k\in T_t}(1-{\mathbb I}_k'){\tilde \E}{\mathbb I}_{kt}{\tilde \E}{\hat \xi}_{3k}
=
Y_tb_1t^{-1/2}\sum_{k\in T_t}(1-{\mathbb I}_k')X_k^2\beta_kk^{-1/2}
\end{displaymath}
and apply the simple inequality 
\begin{equation}\label{truncated}
 \E X_k^2(1-{\mathbb I}_k')\le \E  X_{\underline t}^2(1-{\mathbb I}_{\underline t}'),
 \qquad
k\in T_t. 
\end{equation}
Here we denote ${\underline t}=\min\{k:\, k\in T_t\}$. 
We obtain
\begin{displaymath}
\E(L_{4t}-L_{5t})
=
\E {\tilde \E}(L_{4t}-L_{5t})
\le 
S_t b_1^2 \E  X_{\underline t}^2(1-{\mathbb I}_{\underline t}')=o(1).
\end{displaymath}
Here we denote $S_t=t^{-1/2}\sum_{t\in T_t}\beta_kk^{-1/2}$
and use the simple inequality $S_t\le c$. Furthermore, we invoke the bound
$\E  X_{\underline t}^2(1-{\mathbb I}_{\underline t}')=o(1)$, which holds since
 ${\underline t}\to+\infty$ as $t\to+\infty$ 

Let us prove (\ref{TL3}).
Proceeding as in (\ref{dTV4}), (\ref{dTV41}) and using the 
identity ${\tilde {\mathbb I}}_k={\tilde {\mathbb I}}_k{\mathbb I}_k'$  we write
\begin{displaymath}
 {\tilde d}_{TV}(L_{5t}, L_{6t})
\le 
\sum_{k\in T_t}
{\mathbb I}_k'
{\tilde \PP}({\mathbb I}_k\not=0)
{\tilde d}_{TV}({\hat \xi}_{3k}, {\tilde {\mathbb I}}_k).
\end{displaymath}
Next, we estimate 
${\mathbb I}_k'{\tilde d}_{TV}({\hat \xi}_{3k}, {\tilde {\mathbb I}}_k)\le \zeta_k^2$, by
LeCam's inequality (\ref{LeCam}), and invoke the inequality
${\tilde \PP}({\mathbb I}_k\not=0)\le \lambda_{kt}$. 
We obtain
\begin{displaymath}
 {\tilde d}_{TV}(L_{5t}, L_{6t})
\le 
\sum_{k\in T_t}
{\mathbb I}_k'\lambda_{kt}\zeta_k^2
\le 
\varepsilon\sum_{k\in T_t}
\lambda_{kt}\zeta_k.
\end{displaymath}
Here we estimated $\zeta_k^2\le \varepsilon\zeta_k$. Now the inequalities
\begin{displaymath}
d_{TV}(L_{5t},L_{6t})\le \E{\tilde d}_{TV}(L_{5t},L_{6t})
\le 
\varepsilon \sum_{k\in T_t}
\E\lambda_{kt}\zeta_k
\le a_2b_1^2S_t\varepsilon
\end{displaymath}
and $S_t\le c$ imply  (\ref{TL3}).

Let us prove the first relation of (\ref{TL2}).
In view of  
(\ref{fact}) it suffices to show that ${\tilde d}_{TV}(L_{6t},L_{7t})=o_P(1)$.
For this purpose we write 
\begin{displaymath}
{\tilde d}_{TV}(L_{6t},L_{7t})\le 
{\mathbb I}_{{\cal A}_1}{\tilde d}_{TV}(L_{6t},L_{7t})+{\mathbb I}_{{\overline {\cal A}}_1},
\end{displaymath}
 where
${\mathbb I}_{{\overline {\cal A}}_1}=o_P(1)$, see  (\ref{d-L22}), and estimate 
using  LeCam's inequality (\ref{LeCam})
\begin{displaymath}
{\mathbb I}_{{\cal A}_1}{\tilde d}_{TV}(L_{6t},L_{7t})
\le 
{\mathbb I}_{{\cal A}_1}
\sum_{k\in T_t}{\tilde \PP}^2({\mathbb I}_k{\tilde {\mathbb I}}_k=1){\mathbb I}_k'
\le 
Y_t^2b_1^2t^{-1}\sum_{k\in T_t}k^{-1}\beta_k^2X_k^4=o_P(1)
\end{displaymath}
Here we used 
the simple inequality
$t^{-1}\sum_{k\in T_t}k^{-1}\beta_k^2X_k^4\le c t^{-2\nu}\sum_{k\le bt^{\nu}}X_k^4$
and 
the fact that 
$\E X_1^2<\infty$ implies the bound $n^{-2}\sum_{k\le n}X_k^4=o_P(1)$, 
as $n\to+\infty$.

\medskip

Finally, we show the second relation of (\ref{TL2}). We write
${\tilde \E} e^{izL_{7t}}=e^{\zeta(e^{iz}-1)}$ and use the bound 
\begin{equation}\label{Y1gamma}
 Y_tb_1a_2\gamma-\zeta=o_P(1).
\end{equation}
We note that,  for any real $z$, the function 
$u\to e^{u(e^{iz}-1)}$ is bounded and uniformly continuous for $u\ge 0$. 
Therefore, (\ref{Y1gamma}) implies the convergence 
\begin{displaymath}
 \E e^{izL_{7t}}
=
\E e^{\zeta(e^{iz}-1)}
\to
\E e^{Y_tb_1a_2\gamma(e^{iz}-1)}
=
\E e^{iz\Lambda_3}.
\end{displaymath}
It remains to prove (\ref{Y1gamma}). We note that (\ref{truncated}) implies
\begin{equation}\label{vakaras1}
 \zeta=Y_tb_1\gamma_2''t^{-1/2}\sum_{k\in T_t}X_k^{2}k^{(2\nu)^{-1}-1}.
\end{equation}
Next, we split $\gamma=\gamma_1'\gamma_2''$ and invoke the expression for  
$\gamma_1'$ 
obtained from (\ref{Sumos2}). We obtain
\begin{equation}\label{vakaras2}
 Y_tb_1a_2\gamma=Y_tb_1\gamma_2''
t^{-1/2}\sum_{k\in T_t}k^{-(1-2\nu)/(2\nu)}a_2+o_P(1).
\end{equation}
We observe that (\ref{Y1gamma}) follows from (\ref{vakaras1}), 
(\ref{vakaras2}) and the bound
\begin{equation}\label{vakaras3}
 R:=t^{-1/2}\sum_{k\in T_t}(a_2-X_k^{2})k^{(2\nu)^{-1}-1}=o_P(1).
\end{equation}
In the proof of (\ref{vakaras3}) we use the standard truncation argument.
Let $\varepsilon>0$ and let ${\hat R}$ be defined as $R$ above, but with $X_k^2$ replaced 
by 
${\hat X}_k^2=X_k^2{\mathbb I}_{\{X_k^2<\varepsilon^2 k\}}$ and $a_2$ replaced by 
$\E {\hat X}_k^2$. We have $R={\hat R}+o_P(1)$ and 
$\PP({\hat R}>\varepsilon^{1/2})\le \varepsilon^{-1}\E{\hat R}^2\le c\varepsilon$.
Letting $\varepsilon\to 0$ we obtain $R=o_P(1)$.

\end{proof}


\subsection{Proof of Theorem 3}

Before the proof we state an auxiliary lemma.

\begin{lem}\label{momentai}  Denote ${\bf I}_i^x={\mathbb I}_{\{X_i>i^{1/2}\}}$ 
and 
${\bf I}_j^y={\mathbb I}_{\{Y_j>j^{1/2}\}}$. 
We have
\begin{equation}\label{LAssort-1}
\lambda_{ij}(1-{\bf I}_i^x-{\bf I}_j^y)
\le 
\min\{1,\lambda_{ij}\}
\le 
\lambda_{ij}
\end{equation}
\end{lem}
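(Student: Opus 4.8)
The plan is to treat the two inequalities separately, the right one being immediate and the left one reducing to a short case analysis. Recall that $\lambda_{ij}=X_iY_j/\sqrt{ij}\ge 0$ since the weights $X_i,Y_j$ are non-negative. Hence $\min\{1,\lambda_{ij}\}\le \lambda_{ij}$ holds trivially, which is the second inequality of (\ref{LAssort-1}).

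For the first inequality I would condition on the value of the factor $1-{\bf I}_i^x-{\bf I}_j^y$, which can only equal $1$, $0$, or $-1$. The only case in which the left-hand side is positive is when both indicators vanish, i.e. when $X_i\le i^{1/2}$ \emph{and} $Y_j\le j^{1/2}$; in every other case at least one of ${\bf I}_i^x,{\bf I}_j^y$ equals $1$, so $1-{\bf I}_i^x-{\bf I}_j^y\le 0$, and since $\lambda_{ij}\ge 0$ the left-hand side is $\le 0\le \min\{1,\lambda_{ij}\}$.

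It therefore remains to handle the case $X_i\le i^{1/2}$, $Y_j\le j^{1/2}$, where the left-hand side equals $\lambda_{ij}$. Here the key (and essentially only) substantive step is the elementary bound
\begin{displaymath}
\lambda_{ij}=\frac{X_iY_j}{\sqrt{ij}}\le \frac{i^{1/2}j^{1/2}}{\sqrt{ij}}=1,
\end{displaymath}
which forces $\min\{1,\lambda_{ij}\}=\lambda_{ij}$. Thus in this case the left-hand side equals the middle term, and (\ref{LAssort-1}) holds with equality. Combining the cases yields the first inequality, completing the proof.

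I do not expect any real obstacle: the statement is an elementary pointwise inequality, and the case distinction above is exhaustive. The one point to record carefully is that the truncation thresholds $i^{1/2}$ and $j^{1/2}$ are exactly calibrated so that simultaneously untruncated weights give $\lambda_{ij}\le 1$, which is what makes the bound $\min\{1,\lambda_{ij}\}\ge \lambda_{ij}(1-{\bf I}_i^x-{\bf I}_j^y)$ tight rather than merely valid.
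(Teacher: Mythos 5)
Your proof is correct and rests on the same key observation as the paper's: that $X_i\le i^{1/2}$ and $Y_j\le j^{1/2}$ together force $\lambda_{ij}\le 1$, which the paper phrases as the indicator inequality ${\mathbb I}_{\{\lambda_{ij}>1\}}\le {\bf I}_i^x+{\bf I}_j^y$ and then folds into a one-line algebraic chain rather than an explicit case split. The two arguments are essentially the same.
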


\begin{proof}[Proof of Lemma \ref{momentai}]
The inequality ${\mathbb I}_{\{\lambda_{ij}> 1\}}\le {\bf I}_i^x+{\bf I}_j^y$
implies
\begin{displaymath}
\lambda_{ij}(1-{\bf I}_i^x-{\bf I}_j^y)
\le
 \lambda_{ij}-(\lambda_{ij}-1){\mathbb I}_{\{\lambda_{ij}> 1\}}
=
\min\{1,\lambda_{ij}\}.
\end{displaymath}
\end{proof}

\begin{proof}[Proof of Theorem \ref{T3}]
The proof of (\ref{alpha-s}), (\ref{alpha-t}), (\ref{alpha-u}) is very much the same.
Therefore, we only prove (\ref{p(Delta)}) and (\ref{alpha-t}). 

Before the proof we introduce some notation. Denote 
\begin{displaymath}
T_{st}=T_s\cap T_t,
\qquad
T_{tu}=T_t\cap T_u,
\qquad
T_{stu}=T_s\cap T_t\cap T_u,
\qquad
T=T_s\cup T_t\cup T_u.
\end{displaymath}
An attribute $w_i$ is called  witness of the edge $v_j\sim v_k$ 
whenever ${\mathbb I}_{ij}{\mathbb I}_{ik}=1$. In this case we say that witness
$w_i$ realizes the edge $v_j\sim v_k$.
Let $\Delta_1
=
\{\exists i:\,  {\mathbb I}_{is}{\mathbb I}_{it}{\mathbb I}_{iu}=1\}$ denote 
the event that all three edges 
of the triangle $v_s,v_t,v_u$ are realized by a common witness.
Let $\Delta_2$ denote the event that all three edges are realized by different witnesses,
\begin{displaymath}
\Delta_2
=
\{\exists \ \ {\text{distinct}} \ \  i,j,k \ \ {\text{such that }}
\ \ 
{\mathbb I}_{is}{\mathbb I}_{it}=1,
\ \  
{\mathbb I}_{js}{\mathbb I}_{ju}=1,
\ \ 
{\mathbb I}_{kt}{\mathbb I}_{ku}=1
\}.
\end{displaymath}
Let $\Delta=\{v_s\sim v_t$, $v_s\sim v_u$, $v_t\sim v_u\}$ 
denote the event  that vertices $v_s,v_t,v_u$ make up a triangle.
Introduce events ${\cal H}_t=\{v_s\sim v_t, v_t\sim v_u\}$ and
${\cal K}_t
=
\{
\exists i\not=j:\, {\mathbb I}_{it}{\mathbb I}_{is}{\mathbb I}_{jt}{\mathbb I}_{ju}=1\}$,
and  random variables
\begin{eqnarray}\nonumber
&& 
S=\sum_{au\le k\le bs}{\mathbb I}_{ks}{\mathbb I}_{kt}{\mathbb I}_{ku},
\qquad
Q=\sum_{au\le i<j\le bs}{\mathbb I}_{is}{\mathbb I}_{it}{\mathbb I}_{iu}
{\mathbb I}_{js}{\mathbb I}_{jt}{\mathbb I}_{ju},
\\
\nonumber
&&
S_t
=
\sum_{(i,j)\in I}
{\mathbb I}_{it}{\mathbb I}_{is}{\mathbb I}_{jt}{\mathbb I}_{ju},
\qquad
Q_t
=
\sum_{(i,j)\in I}
\
\
\sum_{(k,r)\in I, (k,r)\not=(i,j)}
{\mathbb I}_{is}{\mathbb I}_{it}{\mathbb I}_{jt}{\mathbb I}_{ju}
{\mathbb I}_{kt}{\mathbb I}_{ks}{\mathbb I}_{rt}{\mathbb I}_{ru}.
\end{eqnarray}
Here $I$ denote the set of all ordered pairs $(i,j)\in T\times T$ such that $i\not=j$. We remark that every $(i,j)$  indicates
a pair $(w_i,w_j)$ of possible witnesses
 of edges $v_s\sim v_t$ and $v_t\sim v_u$ respectively.

We note that for  $0<s<t<u$
satisfying
$\lceil au\rceil\le \lfloor bs\rfloor$ the ratios $t/s, u/t, u/s\in [1, b/a]$.
Hence the variables  $s,t,u\to+\infty$ are of the same order of magnitude.
\bigskip

Let us prove (\ref{p(Delta)}). 
We observe that $\Delta_1\subset \Delta\subset \Delta_1\cup \Delta_2$. Hence
\begin{equation}\label{Delta}
 \PP(\Delta_1)\le \PP(\Delta)\le \PP(\Delta_1)+\PP(\Delta_2). 
\end{equation}
Next, by inclusion exclusion, we write $S-Q\le {\mathbb I}_{\Delta_1}\le S$ and estimate
\begin{equation}\label{Delta11}
 \E S-\E Q\le \PP(\Delta_1)\le \E S.
\end{equation}
Finally, combining (\ref{Delta}) and (\ref{Delta11}) with the relations
\begin{eqnarray}\label{clust-S}
 \E S
&
=
&
\sum_{au\le k\le bs}\frac{\E X_k^3Y_sY_tY_u}{k^{3/2}\sqrt{stu}}+o(t^{-2})
=
\frac{a_3b_1^3}{\sqrt{stu}}
\left(\frac{2}{\sqrt{au}}-\frac{2}{\sqrt{bs}}\right)+o(t^{-2}),
\\
\nonumber
 \E Q
&
\le
&
\sum_{au\le i<j\le bs}\E \lambda_{is}\lambda_{it}\lambda_{iu}
\lambda_{js}\lambda_{jt}\lambda_{ju}
\le 
\frac{a_3^2b_2^3}{stu}\sum_{au\le i<j\le bs}\frac{1}{i^{3/2}j^{3/2}}
=O(t^{-4}),
\\
\label{Delta222}
\PP(\Delta_2)
&
\le
& 
\E \sum_{i,j,k\in T,\, i\not=j\not=k}
{\mathbb I}_{is}{\mathbb I}_{it}
{\mathbb I}_{js}{\mathbb I}_{ju}
{\mathbb I}_{kt}{\mathbb I}_{ku}
\le 
\frac{a_2^3b_2^3}{stu}
 \left(\sum_{i\in T}i^{-1}\right)^3=O(t^{-3}),
\end{eqnarray}
we obtain asymptotic expression (\ref{p(Delta)}) for $p_{\Delta}=\PP(\Delta)$.
We note that in the first step of (\ref{clust-S}) we apply Lemma \ref{momentai}, and in the last step of (\ref{Delta222}) we use the inequality
$\sum_{i\in T}i^{-1}\le c$.

\bigskip

Let us prove (\ref{alpha-t}). We note that (\ref{alpha-t}) follows from (\ref{p(Delta)}) and the relation
\begin{equation}
\label{path}
\PP({\cal H}_t)
=
\PP(\Delta)+a_2^2b_1^2b_2\frac{1} {t\sqrt{su}}\delta_{t|su}+o(t^{-2}).
\end{equation}
It remains to show (\ref{path}).
From the identity ${\cal H}_t=\Delta_1\cup{\cal K}_t$ we obtain
\begin{equation}\label{HHt1}
 \PP({\cal H}_t)
 =
 \PP(\Delta_1)+\PP({\cal K}_t)-\PP(\Delta_1\cap{\cal K}_t).
\end{equation}
Next, by inclusion exclusion, we write 
$S_t-Q_t\le {\mathbb I}_{{\cal K}_t}\le S_t$. These inequalities imply 
\begin{equation}\label{KTTT}
\E S_t-\E S_t(1-{\mathbb I}_{{\cal D}_{\varepsilon}})-\E Q_t{\mathbb I}_{{\cal D}_{\varepsilon}}
\le
\E{\mathbb I}_{{\cal K}_t}{\mathbb I}_{{\cal D}_{\varepsilon}}
\le 
\PP({\cal K}_t)
\le 
\E S_t.
\end{equation}
Here the event ${\cal D}_{\varepsilon}=\{Y_t\le \varepsilon t\}$ and  $\varepsilon\in(0,1)$ is non-random. In the remaining part 
of the proof we show that
\begin{eqnarray}\label{KSQ1}
&& 
\E S_t 
=
a_2^2b_1^2b_2\frac{1} {t\sqrt{su}}\delta_{t|s,u}+o(t^{-2}),
\\
\label{KSQ3}
&&
\PP(\Delta_1\cap{\cal K}_t)
=
O(t^{-3}),
\end{eqnarray}
and that there exists
 $c^*>0$ which does not depend on $s,t,u$ and $\varepsilon$ such that, for any $\varepsilon\in (0,1)$, 
\begin{equation}
\label{KSQ2***}
\E Q_t{\mathbb I}_{{\cal D}_{\varepsilon}}
\le 
c^*\varepsilon t^{-2}+O(t^{-3}),
\qquad
\E S_t(1-{\mathbb I}_{{\cal D}_{\varepsilon}})
=
o(t^{-2}).
\end{equation}
We observe that  (\ref{path}) follows from 
(\ref{HHt1}), (\ref{KTTT}),  (\ref{KSQ1}) and the bounds (\ref{KSQ3}), (\ref{KSQ2***}).
  
Let us prove (\ref{KSQ1}). Since the product
${\bar p}_{ij}:=p_{is}p_{it}p_{jt}p_{ju}$ is non zero whenever
$i\in T_{st}$ and $j\in T_{tu}$, we have
\begin{equation}\label{Stt}
 \E S_t
=
\E\sum_{(i,j)\in I}{\bar p}_{ij}
=
\E\sum_{(i,j): i\in T_{st}, j\in T_{tu}, \, i\not=j}{\bar p}_{ij}.
\end{equation}
It is convenient to split the set 
$\{(i,j): i\in T_{st}, j\in T_{tu}, \, i\not=j\}= {\mathbb T}_1\cup\cdots\cup  {\mathbb T}_4$
where 
\begin{eqnarray}\nonumber
&&
 {\mathbb T}_1=(T_{st}\setminus T_u)\times T_{tu},
\qquad
{\mathbb T}_2=T_{stu}\times (T_{tu}\setminus T_s),
\\
\nonumber
&&
{\mathbb T}_3=\{(i,j): \, i,j\in T_{stu}, i<j\},
\quad
{\mathbb T}_4=\{(i,j): \, i,j\in T_{stu}, j<i\}.
\end{eqnarray}
and write sum (\ref{Stt}) in the form 
\begin{equation}\label{Sttt}
{\tilde \E}S_t=S_{t1}+\dots+ S_{t4},
\qquad
S_{tk}
:=
\sum_{(i,j)\in {\mathbb T}_k}{\bar p}_{ij}.
\end{equation}
Now (\ref{KSQ1}) follows from (\ref{Sttt}) and the relations, for $1\le k\le 4$,
\begin{eqnarray}\label{KSQ1+}
&& 
\E S_{tk}
=
\E\sum_{(i,j)\in {\mathbb T}_k} \lambda_{is}\lambda_{it}\lambda_{jt}\lambda_{ju}+o(t^{-2})
=
a_2^2b_1^2b_2\frac{1} {t\sqrt{su}}\sum_{(i,j)\in {\mathbb T}_k}\frac{1}{ij}+o(t^{-2}),
\\
\nonumber
&&
\sum_{1\le k\le 4}\sum_{(i,j)\in {\mathbb T}_k}\frac{1}{ij}
=
\delta_{t|su}+O(t^{-1}).
\end{eqnarray}
In the first step of (\ref{KSQ1+}) we used Lemma \ref{momentai}.

Let us prove the first bound of  (\ref{KSQ2***}). 
We split the collection  of vectors$(i,j,k,r)$
\begin{displaymath}
{\mathbb Q}=\bigl\{(i,j,k,r)\in T^4
\
\
{\text{ such that}}
\
\ 
i\not=j, k\not=r 
\ \
{\text{and}}
\
\
(i,j)\not=(k,r)\bigr\}
\end{displaymath}
 into five non intersecting  pieces
${\mathbb Q}={\mathbb Q}_1\cup\cdots\cup{\mathbb Q}_5$, where
\begin{eqnarray}\nonumber
&&
{\mathbb Q}_1=\bigl\{(i,j,k,r): i=k\bigr\}\cap {\mathbb Q},
\qquad
{\mathbb Q}_2=\bigl\{(i,j,k,r): i=r\bigr\}\cap {\mathbb Q},
\\
\nonumber
&&
{\mathbb Q}_3=\bigl\{(i,j,k,r): j=k\bigr\}\cap {\mathbb Q},
\qquad
{\mathbb Q}_4=\bigl\{(i,j,k,r): j=r\bigr\}\cap {\mathbb Q},
\end{eqnarray}
and ${\mathbb Q}_5=\bigl\{(i,j, k,r):$ all $i,j,k,r$ are distinct $\bigr\}\cap {\mathbb Q}$, and write
\begin{displaymath}
Q_t
=\sum_{1\le z\le 5}Q_{tz},
\qquad
Q_{tz}=
\sum_{(i,j, k,r)\in {\mathbb Q}_z}{\mathbb I}_{is}{\mathbb I}_{it}{\mathbb I}_{jt}{\mathbb I}_{ju}
{\mathbb I}_{ks}{\mathbb I}_{kt}{\mathbb I}_{rt}{\mathbb I}_{ru}.
\end{displaymath}
Denote  ${\tilde {\mathbb Q}}=\{(i,j,r)\in T^3:$ all $i,j,r$ are distinct$\}$. 
Observing that the typical summand of the sum $Q_{t1}$ is  
${\mathbb I}_{is}{\mathbb I}_{it}{\mathbb I}_{jt}{\mathbb I}_{ju}
{\mathbb I}_{rt}{\mathbb I}_{ru}$ (since $i=k$), we write
\begin{eqnarray}
\nonumber
\E Q_{t1}{\mathbb I}_{{\cal D}_{\varepsilon}}
&
\le 
&
\E \sum_{(i,j,r)\in {\tilde {\mathbb Q}}}\lambda_{is}\lambda_{it}\lambda_{jt}\lambda_{ju}\lambda_{rt}\lambda_{ru}
{\mathbb I}_{{\cal D}_{\varepsilon}}
\\
\nonumber
&
\le 
&
 \frac{a_2^3}{s^{1/2}t^{3/2}u}\E Y_sY_t^3Y_u^2{\mathbb I}_{{\cal D}_{\varepsilon}}\left(\sum_{i\in T}\frac{1}{i}\right)^3
\\
\nonumber
&
\le
&
c^3\varepsilon \frac{a_2^3}{s^{1/2}t^{1/2}u}\E Y_sY_t^2Y_u^2
\\
\nonumber
&
\le
&
c'\varepsilon t^{-2}.
\end{eqnarray}
Here used  inequalities $Y_tt^{-1}{\mathbb I}_{{\cal D}_{\varepsilon}}\le \varepsilon$ and  $\sum_{i\in T}\frac{1}{i}\le c$.
 Similarly, we prove the inequality $\E Q_{t4}{\mathbb I}_{{\cal D}_{\varepsilon}}\le c'\varepsilon t^{-2}$.
Furthermore, observing that the typical summand of the sum $Q_{t2}$ is  
\linebreak
${\mathbb I}_{is}{\mathbb I}_{it}{\mathbb I}_{iu}{\mathbb I}_{jt}{\mathbb I}_{ju}
{\mathbb I}_{ks}{\mathbb I}_{kt}$ (since $i=r$), we write
\begin{eqnarray}
\nonumber
\E Q_{t2}{\mathbb I}_{{\cal D}_{\varepsilon}}
&
\le 
&
\E \sum_{(i,j,k)\in {\tilde {\mathbb Q}}}\lambda_{is}\lambda_{it}\lambda_{iu}\lambda_{jt}\lambda_{ju}\lambda_{ks}\lambda_{kt}
{\mathbb I}_{{\cal D}_{\varepsilon}}
\\
\nonumber
&
\le 
&
 \frac{a_3a_2^2}{st^{3/2}u}\E Y^2_sY_t^3Y_u^2{\mathbb I}_{{\cal D}_{\varepsilon}}
 \left(\sum_{i\in T}\frac{1}{i}\right)^2
\left( \sum_{i\in T}\frac{1}{i^{3/2}}\right)
\\
\nonumber
&
\le 
&
c^3 \frac{a_3a_2^2}{stu}\E Y^2_sY_t^2Y_u^2.
\end{eqnarray}
In the last step we used inequalities $Y_tt^{-1}{\mathbb I}_{{\cal D}_{\varepsilon}}\le 1$ and  $\sum_{i\in T}\frac{1}{i^{3/2}}\le ct^{-1/2}$.
Hence, $\E Q_{t2}{\mathbb I}_{{\cal D}_{\varepsilon}}=O(t^{-3})$. Similarly, we prove the bound $\E Q_{t3}{\mathbb I}_{{\cal D}_{\varepsilon}}=O(t^{-3})$. Finally, we estimate
 \begin{eqnarray}
 \nonumber
 \E Q_{t5}{\mathbb I}_{{\cal D}_{\varepsilon}}
& 
\le
& 
 \E\sum_{(i,j,k,r)\in {\mathbb Q}_5}\lambda_{is}\lambda_{is}\lambda_{jt}\lambda_{ju}\lambda_{ks}\lambda_{kt}\lambda_{rt}\lambda_{ru}
 {\mathbb I}_{{\cal D}_{\varepsilon}}
\\
\nonumber
&
\le 
&
\frac{a_2^4}{st^2u}\E Y_s^2Y_t^4Y_u^2{\mathbb I}_{{\cal D}_{\varepsilon}}\left(\sum_{i\in T}\frac{1}{i}\right)^{4}
\\
\nonumber
&
\le 
&
c'\varepsilon^2 t^{-2}.
\end{eqnarray}
In the last step we used the inequality $Y_t^2t^{-2}{\mathbb I}_{{\cal D}_{\varepsilon}}\le \varepsilon^2$.

Let us prove the second bound of  (\ref{KSQ2***}).  We have
\begin{displaymath}
\E S_t(1-{\mathbb I}_{{\cal D}_{\varepsilon}})
\le
\E\sum_{i,j\in T,\, i\not=j}\lambda_{is}\lambda_{it}\lambda_{jt}\lambda_{ju}(1-{\mathbb I}_{{\cal D}_{\varepsilon}})
\le \frac{a_2^2b_1^2}{st}\E Y_t^2{\mathbb I}_{\{Y_t\ge \varepsilon t\}}\left(\sum_{i\in T}i^{-1}\right)^2
=o(t^{-2}).
\end{displaymath}

Let us prove (\ref{KSQ3}). The inequalities ${\mathbb I}_{{\cal K}_t}\le S_t$, 
${\mathbb I}_{\Delta_1}\le S$ and $S\le {\tilde S}$, where
\begin{displaymath}
{\tilde S}=\sum_{k\in T}{\mathbb I}^*_k
\qquad
{\text{and}}
\qquad
{\mathbb I}^*_k={\mathbb I}_{ks}{\mathbb I}_{kt}{\mathbb I}_{ku},
\end{displaymath}
imply $\PP(\Delta_1\cap {\cal K}_t)
=
\E  {\mathbb I}_{\Delta_1}{\mathbb I}_{{\cal K}_t}
\le
\E S_t{\tilde S}$. We show that $\E S_t{\tilde S}=O(t^{-3})$.
We split $S_t{\tilde S}={\tilde S}_1+{\tilde S}_2$, 
\begin{displaymath}
{\tilde S}_1
=
\sum_{i\in T}\sum_{j\in T\setminus \{i\}}
{\mathbb I}_{is}{\mathbb I}_{it}{\mathbb I}_{jt}{\mathbb I}_{ju}({\mathbb I}^*_i+{\mathbb I}^*_j),
\qquad
{\tilde S}_2
=
\sum_{(i,j,k)\in{\tilde{\mathbb Q}}}
{\mathbb I}_{is}{\mathbb I}_{it}{\mathbb I}_{jt}{\mathbb I}_{ju}{\mathbb I}^*_k,
\end{displaymath}
and estimate
\begin{eqnarray}
\nonumber
\E {\tilde S}_1
&
\le
&
\E \sum_{i\in T}\sum_{j\in T\setminus \{i\}}\lambda_{is}\lambda_{it}\lambda_{jt}\lambda_{ju}(\lambda_{iu}+\lambda_{js})
=O(t^{-3}),
\\
\label{KSQ3++}
\E {\tilde S}_2
&
\le
&
\E {\tilde S}'_2
\le
\E \sum_{(i,j,k)\in{\tilde{\mathbb Q}}}\lambda_{is}\lambda_{it}\lambda_{jt}\lambda_{ju}\lambda_{ks}\lambda_{ku}
=O(t^{-3}).
\end{eqnarray}
Here ${\tilde S}'_2$ is defined in the same way as ${\tilde S}_2$, but with ${\mathbb I}^*_k$ replaced by 
${\mathbb I}'_k={\mathbb I}_{ks}{\mathbb I}_{ku}$.

\end{proof}

\subsection{Proof of (\ref{s-momentai})}

We only give a sketch of the proof.
Let $s<t$ satisfy the inequality $\lceil at\rceil\le \lfloor bs\rfloor$. An 
attribute $w_i$ is called  witness of the edge $v_s\sim v_t$  whenever 
${\mathbb I}_{it}{\mathbb I}_{is}=1$.
The sums 
\begin{displaymath}\label{Assrt-0}
e_{st}=\sum_{i\in T_s\cap T_t}{\mathbb I}_{is}{\mathbb I}_{it}
\qquad
{\text{and}}
\qquad
q_{st}=\sum_{\{i,j\}\subset T_s\cap T_t}{\mathbb I}_{is}{\mathbb I}_{it} {\mathbb I}_{js}{\mathbb I}_{jt}
\end{displaymath}
count witnesses and pairs of witnesses of the edge $v_s\sim v_t$, respectively. 
We write, by inclusion-exclusion,
\begin{equation}
e_{st}-q_{st}
\le 
{\mathbb I}_{\{v_s\sim v_t\}}
\le 
e_{st}
\end{equation}
and note that the quadratic term $q_{st}$ is negligibly small. 
Hence, we approximate 
\begin{equation}\label{e-st+}
 {\mathbb I}_{\{v_s\sim v_t\}}= e_{st}(1+o_P(1)),
\qquad
\PP(v_s\sim v_t)=(1+o(1))\E e_{st}.
\end{equation}

Given $t$ and $i,j\in T_t$, we denote  $T_{it}^*=T_i^*\setminus\{t\}$ and introduce random variables
\begin{displaymath}
u_{it}=\sum_{k\in T_{it}^*}{\mathbb I}_{ik}, 
\qquad
z_{ijt}=\sum_{k\in T_{it}^*\cap T_{jt}^*}{\mathbb I}_{ik}{\mathbb I}_{jk},
\qquad
L_t=\sum_{i\in T_t}{\mathbb I}_{it}u_{it},
\qquad
Q_t=\sum_{\{i,j\}\subset T_t}{\mathbb I}_{it}{\mathbb I}_{jt}z_{ijt}.
\end{displaymath}
We remark that $L_t$ counts pairs $(v_s\sim v_t; w_i)$, where 
$w_i$ is a witness of the edge $v_s\sim v_t$ in
 $G_{X,Y}$, for some $v_s\in W\setminus\{v_t\}$. In particular,  we have $d(v_t)\le L_t$. 
Similarly, $Q_t$ counts all triples $(v_s\sim v_t; w_i, w_j)$, where $w_i$ and $w_j$ 
are distinct witnesses
of an edge $v_s\sim v_t$. Note that a neighbour $v_s$ of $v_t$, which has 
$k$ witnesses of the edge $\{v_s\sim v_t\}$, 
contributes $1$ to the number $d(v_t)$ of
neighbours of $v_t$. It contributes $k$ to the sum $L_t$ and it 
contributes $\tbinom{k}{2}$ to the sum $Q_t$. Hence, we always have
\begin{equation}\nonumber
L_t-Q_t\le d(v_t)\le L_t.
\end{equation}
We note that the quadratic term  $Q_t$ is negligibly small 
and approximate
$d(v_t)= L_t(1+o_P(1))$. Combining this approximation with
(\ref{e-st+}) we obtain, for $r=1,2$ and $u=s,t$,
\begin{equation}\label{12-22-1+}
 \E_{st}d^r(v_u)=(\E e_{st})^{-1}\E e_{st}L^r_u+o(1)
\quad
\
{\text{and}}
\quad
\
\E_{st}d(v_s)d(v_t)= (\E e_{st})^{-1}\E e_{st}L_sL_t+o(1).
\end{equation}

Next we evaluate  expectations in the right-hand sides of (\ref{12-22-1+}).
A straightforward but tedious calculation shows that
\begin{eqnarray}
\nonumber
\E e_{st}
&=&
\Theta(1+o(1)) h_1,
\\
\nonumber
\E e_{st}L_s
&=&
\E e_{st}L_t+o(\Theta)=
\Theta(1+o(1))(h_1+h_2+2h_3),
\\
\nonumber
\E e_{st}L^2_s
&=&
\E  e_{st}L^2_t+o(\Theta)
=
\Theta(1+o(1))(h_1+3h_2+6h_3+h_4+6h_5+4h_6),
\\
\nonumber
\E e_{st}L_sL_t
&=&
\Theta(1+o(1))(h_1+3h_2+4h_3+h_4+4h_5+4h_7).
\end{eqnarray}
Here  we denote $\Theta=(st)^{-1/2}\ln(bs/at)$. We recall that $h_i$ are defined in 
(\ref{estLs}) above.
Now (\ref{s-momentai}) follows from  (\ref{12-22-1+}).

\bigskip

{\it Acknowledgement}. 
Research of M. Bloznelis
was supported  by the  Research Council of Lithuania grant MIP-053/2011.


\newpage

\end{document}